\newsavebox \foobox
\newlength{\foodim}
\newtheorem{theorem}{Theorem}
\newtheorem{proof}{Proof}
\newtheorem{corollary}{Corollary}
\newtheorem{definition}{Definition}
\newtheorem{lemma}{Lemma}
\newtheorem{proposition}{Proposition}
\numberwithin{equation}{section}
\journal{Computers \& Mathematics with Applications}
\begin{document}

\begin{frontmatter}
\title{On the asymptotic stability of the time--fractional Lengyel--Epstein system}
\author[a]{Djamel Mansouri}
\author[b]{Salem Abdelmalek}
\author[c]{Samir Bendoukha}
\address[a]{Department of Mathematics, ICOSI laboratory, University Abbes Laghrour, Khenchela, Algeria}
\address[b]{Department of Mathematics and Informatics, Faculty of Exact Sciences and Natural Sciences and Life, University of Larbi Tebessi,Tebessa,12002, Algeria}
\address[c]{Electrical Engineering Department, College of Engineering at Yanbu, Taibah University, Saudi Arabia. E-mail address: sbendoukha@taibahu.edu.sa}

\begin{abstract}
This paper concerns a time fractional version of the conventional Lengyel--Epstein CIMA reaction model. We define the invariant regions of the system and establish sufficient conditions for the unique equilibrium's local and global asymptotic stability. Numerical results are presented to illustrate the effect of the fractional order on system dynamics.
\end{abstract}
\begin{keyword}
Fractional calculus, fractional Lengyel--Epstein system, asymptotic stability, fractional Lyapunov method.
\end{keyword}
\end{frontmatter}

\section{Introduction}

In this paper, we are interested in a fractional version of the
Lengyel--Epstein reaction--diffusion system proposed in \cite%
{Lengyel1992,Lengyel1991} as a model of the chlorite--iodide malonic--acid
(CIMA) chemical reaction \cite{DeKepper1990}. The considered model has
attracted the interest of many researchers since its inception in 1991. The
reason for this interest is the fact that the CIMA\ reaction is one of the
earliest experiments that confirmed the theoretical propositions of Alan
Turing in 1952 \cite{Turing1952} concerning the chemical basis for
morphogenesis and more generally pattern formation. The CIMA\ reaction can
be described by three chemical reaction schemes as follows%
\begin{equation}
\left \{ 
\begin{array}{l}
MA+I_{2}\rightarrow IMA+I^{-}+H^{+}, \\ 
CIO_{2}+I^{-}\rightarrow \frac{1}{2}I_{2}+CIO_{2}^{-}, \\ 
CIO_{2}^{-}+4I^{-}+4H^{+}\rightarrow CI^{-}+2I_{2}+2H_{2}O.%
\end{array}%
\right.  \label{0.1}
\end{equation}%
Considering the empirical rate laws corresponding to these processes and
ignoring constant factors, the model for this reaction was reduced to the
conventional Lengyel--Epstein model with two dependent variables $u$ and $v$
representing the time evolution of the concentrations of $\left[ I^{-}\right]
$ and $\left[ CIO_{2}^{-}\right] $, respectively. The general dynamics of
the Lengyel--Epstein system have been examined in a number of studies.
Sufficient conditions for its local and global asymptotic stability can be
found in \cite{Ni2005,Yi2008,Yi2009,Lisena2014}. In \cite{Yi2008,Wang2013},
the authors establish sufficient conditions for the Turing or
diffusion--driven instability of the system. More details on the formation
of patterns in the Lengyel--Epstein model can be found in \cite{Silva2015}.
Also, results related to the Hopf--bifurcation for the Lengyel--Epstein
system are presented and analyzed in \cite{Jang2004,Yi2008,Wang2013}. In
addition, many studies have also examined modified versions of the system
including \cite%
{Horvath2000,Rudiger2003,Miguez2006,Vazquez2012,Scholz2009,Zheng2014,Gambino2014,Zheng2016,Wei2017,Abdelmalek2017a,Abdelmalek2017b,Abdelmalek2018a,Abdelmalek2018b}
with the aim of relaxing existing asymptotic stability and Turing
instability conditions.

In \cite{Liu2017}, the authors considered the model%
\begin{equation}
\left \{ 
\begin{array}{l}
\dfrac{\partial u}{\partial t}=\nabla ^{\gamma }u+a-u-\dfrac{4uv}{1+u^{2}},
\\ 
\dfrac{\partial v}{\partial t}=\sigma \left[ c\nabla ^{\gamma }v+b\left( u-%
\dfrac{uv}{1+u^{2}}\right) \right] ,%
\end{array}%
\right.  \label{0.3}
\end{equation}%
which accounts for anomalous diffusion in a fractal medium for example. The
term $\nabla ^{\gamma }$ denotes the Riesz fractional operator with $%
1<\gamma <2$. The authors established sufficient conditions for the
existence of Turing patterns and examined their nature. Note that system (%
\ref{0.3}) is fractional in the spatial sense. In our work, we aim to
propose and study the dynamics of the time--fractional system corresponding
to the Lengyel--Epstein model.

The following section states some of the necessary notation and theory
related to fractional systems. Section \ref{SecModel} describes the proposed
system and examines its invariant regions. Section \ref{SecStab} establishes
conditions for the asymptotic stability of the proposed system. Section \ref%
{SecNum} illustrates the analytical conditions through numerical examples.
Finally, Section \ref{SecSum} summarizes the findings of this study and
poses open questions for future investigation.

\section{Fractional Calculus\label{SecPrelim}}

In this section, we start with some of the necessary notation and stability
theory related to the subject.

\begin{definition}
\label{Def1}\cite{Podlubny1999} The Riemann--Liouville fractional derivative
of order $\delta $\ of an integrable function $f\left( t\right) $ is defined
as%
\begin{equation}
_{t_{0}}D_{t}^{-\delta }f\left( t\right) =\frac{1}{\Gamma \left( \delta
\right) }\int_{t_{0}}^{t}\frac{f\left( \tau \right) }{\left( t-\tau \right)
^{1-\delta }}d\tau .  \label{1.1}
\end{equation}%
where $0<\delta \in 
\mathbb{R}
^{+}$ and $\Gamma \left( \delta \right) =\int_{0}^{\infty }e^{-t}t^{\delta
-1}dt$ is the Gamma function.
\end{definition}

\begin{definition}
\label{Def2}\cite{Kilbas2006} The Caputo fractional derivative of order $%
\delta >0$ of a function $f\ $of class $C^{n}$ for $t>t_{0}$ is defined as%
\begin{equation}
_{t_{0}}^{C}D_{t}^{\delta }f\left( t\right) =\frac{1}{\Gamma \left( n-\delta
\right) }\int_{t_{0}}^{t}\frac{f^{\left( n\right) }\left( \tau \right) }{%
\left( t-\tau \right) ^{\delta -n-1}}d\tau ,  \label{1.2}
\end{equation}%
with $n=\min \left \{ k\in 
\mathbb{N}
\ |\ k>\delta \right \} $ and $\Gamma $\ representing the gamma function.
\end{definition}

Note that the constant $\left( u^{\ast },v^{\ast }\right) $ is an
equilibrium for the Caputo fractional non--autonomous dynamic system%
\begin{equation}
\left \{ 
\begin{array}{l}
_{t_{0}}^{C}D_{t}^{\delta }u=F\left( u,v\right) ,\text{ \ \ \ \ \ \ in }%
\mathbb{R}^{+},\medskip \\ 
_{t_{0}}^{C}D_{t}^{\delta }v=G\left( u,v\right) ,\text{ \ \ \ \ \ in }%
\mathbb{R}^{+},%
\end{array}%
\right.  \label{1.6}
\end{equation}%
if and only if 
\begin{equation}
F\left( u^{\ast },v^{\ast }\right) =G\left( u^{\ast },v^{\ast }\right) =0.
\label{1.7}
\end{equation}%
The following lemmas hold.

\begin{lemma}
\label{Lemma2}Let $u\left( t\right) $ be a continuous and differentiable
real function. For any time instant $t\geq t_{0}$,%
\begin{equation}
_{t_{0}}^{C}D_{t}^{\delta }u^{2}\left( t\right) \leq 2u\left( t\right)
_{t_{0}}^{C}D_{t}^{\delta }u\left( t\right) ,  \label{1.8}
\end{equation}%
with $\delta \in \left( 0,1\right] $.
\end{lemma}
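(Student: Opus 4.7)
The plan is to prove the equivalent inequality $\phi(t) := 2u(t)\,_{t_0}^{C}D_t^{\delta}u(t) - \,_{t_0}^{C}D_t^{\delta}u^2(t) \geq 0$, by expressing both Caputo derivatives via Definition \ref{Def2} with $n=1$, combining them under a single integral, and manipulating the integrand into a manifestly non-negative form. This is the classical Aguila-Camacho--Duarte-Mermoud--Gallegos argument. I would dispose of the endpoint case $\delta=1$ first, since then the Caputo derivative reduces to the ordinary derivative and the inequality becomes the equality $(u^2)'=2uu'$.

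For $\delta\in(0,1)$, using Definition \ref{Def2}, I can write
\begin{equation*}
\phi(t) = \frac{1}{\Gamma(1-\delta)} \int_{t_0}^{t} \frac{2u(t)u'(\tau) - 2u(\tau)u'(\tau)}{(t-\tau)^{\delta}}\, d\tau = \frac{-2}{\Gamma(1-\delta)} \int_{t_0}^{t} \frac{[u(\tau)-u(t)]\, u'(\tau)}{(t-\tau)^{\delta}}\, d\tau.
\end{equation*}
Introducing $y(\tau)=u(\tau)-u(t)$, so that $y(t)=0$ and $2y(\tau)y'(\tau)=(y^2)'(\tau)$, the integrand turns into $(y^2)'(\tau)/(t-\tau)^{\delta}$. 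I would then integrate by parts, writing
\begin{equation*}
\int_{t_0}^{t} \frac{(y^2)'(\tau)}{(t-\tau)^{\delta}}\, d\tau = \left[\frac{y^2(\tau)}{(t-\tau)^{\delta}}\right]_{\tau=t_0}^{\tau=t} - \delta \int_{t_0}^{t} \frac{y^2(\tau)}{(t-\tau)^{\delta+1}}\, d\tau,
\end{equation*}
so that after evaluating the boundary,
\begin{equation*}
\phi(t) = \frac{1}{\Gamma(1-\delta)} \left[ \frac{y^2(t_0)}{(t-t_0)^{\delta}} + \delta \int_{t_0}^{t} \frac{y^2(\tau)}{(t-\tau)^{\delta+1}}\, d\tau \right].
\end{equation*}
Since $\Gamma(1-\delta)>0$, $y^2\geq 0$ and $\delta>0$, the right-hand side is non-negative, which establishes the inequality.

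The main technical obstacle is justifying the upper boundary term in the integration by parts, since both $y^2(\tau)$ and $(t-\tau)^{\delta}$ vanish as $\tau\to t^{-}$. Here the differentiability hypothesis on $u$ is essential: by the mean value theorem $y(\tau)=u(\tau)-u(t)=O(t-\tau)$, so $y^2(\tau)/(t-\tau)^{\delta}=O\bigl((t-\tau)^{2-\delta}\bigr)\to 0$ as $\tau\to t^{-}$ because $\delta<1$. This same estimate also shows that the resulting integral $\int_{t_0}^t y^2(\tau)(t-\tau)^{-\delta-1}d\tau$ converges (the integrand is $O((t-\tau)^{1-\delta})$ near $\tau=t$), so the manipulations are legitimate. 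No other subtleties arise, and the inequality follows.
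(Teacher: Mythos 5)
Your proof is correct and is essentially the argument the paper relies on: the paper states Lemma \ref{Lemma2} without proof, deferring to the known result of Aguila--Camacho et al.\ \cite{AguilaCamacho2014}, whose proof is exactly your reduction of the difference to $-\tfrac{1}{\Gamma(1-\delta)}\int_{t_0}^{t}(y^{2})'(\tau)(t-\tau)^{-\delta}\,d\tau$ with $y(\tau)=u(\tau)-u(t)$, followed by integration by parts with a manifestly nonnegative result. Your treatment of the delicate points --- the vanishing of the boundary term at $\tau=t$ via $y(\tau)=O(t-\tau)$ (the cited source uses L'H\^{o}pital instead, under the same effective $C^{1}$ hypothesis) and the convergence of $\int_{t_0}^{t}y^{2}(\tau)(t-\tau)^{-\delta-1}\,d\tau$ --- matches, and slightly sharpens, the standard proof, so nothing further is needed.
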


\begin{lemma}
\label{Lemma3}\cite{Matignon1996} An equilibrium point $\left( u^{\ast
},v^{\ast }\right) $ of (\ref{1.6}) is locally asymptotically stable iff%
\begin{equation}
\left \vert \arg \left( \lambda _{i}\right) \right \vert >\frac{\delta \pi }{%
2},\ \ \ i=1,2,  \label{1.9}
\end{equation}%
where $\lambda _{i}$ are the eigenvalues of the Jacobian matrix $J\left(
u^{\ast },v^{\ast }\right) $ and $\arg \left( \cdot \right) $ denotes the
argument of a complex number.
\end{lemma}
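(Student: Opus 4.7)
The plan is to prove Lemma \ref{Lemma3} in two stages: first establish the eigenvalue criterion (\ref{1.9}) for the linearized system via Laplace transform methods, then lift linear stability to local stability of the nonlinear system (\ref{1.6}) through a fractional variation-of-constants argument.

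First I would linearize by setting $\tilde{u}=u-u^{\ast}$, $\tilde{v}=v-v^{\ast}$, $\tilde{\mathbf{x}}=(\tilde{u},\tilde{v})^{T}$, and using the smoothness of $F,G$ to write
\begin{equation*}
{}_{t_{0}}^{C}D_{t}^{\delta}\tilde{\mathbf{x}}=J\!\left(u^{\ast},v^{\ast}\right)\tilde{\mathbf{x}}+\mathbf{h}(\tilde{\mathbf{x}}),\qquad \|\mathbf{h}(\tilde{\mathbf{x}})\|=o(\|\tilde{\mathbf{x}}\|).
\end{equation*}
Applying the Laplace transform to the linear part, and using the standard identity $\mathcal{L}\{{}_{t_{0}}^{C}D_{t}^{\delta}\tilde{\mathbf{x}}\}(s)=s^{\delta}\tilde{\mathbf{X}}(s)-s^{\delta-1}\tilde{\mathbf{x}}(t_{0})$, yields $(s^{\delta}I-J)\tilde{\mathbf{X}}(s)=s^{\delta-1}\tilde{\mathbf{x}}(t_{0})$. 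Putting $J$ in Jordan form decouples the system into scalar equations ${}_{t_{0}}^{C}D_{t}^{\delta}y=\lambda_{i}y$, whose solutions are the Mittag--Leffler functions $y(t)=y(t_{0})E_{\delta}(\lambda_{i}(t-t_{0})^{\delta})$. The classical asymptotic expansion of $E_{\delta}$ on the complex plane then gives algebraic decay of order $O(t^{-\delta})$ precisely when $|\arg(\lambda_{i})|>\delta\pi/2$, and exponential growth (of the dominant term $\tfrac{1}{\delta}\exp(\lambda_{i}^{1/\delta}t)$) otherwise. This yields both directions of (\ref{1.9}) for the linearized flow.

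The main obstacle is propagating linear stability to the nonlinear system, since no straightforward Hartman--Grobman theorem is available in the fractional setting. For this I would invoke the fractional variation-of-constants formula
\begin{equation*}
\tilde{\mathbf{x}}(t)=E_{\delta}\!\left(J(t-t_{0})^{\delta}\right)\tilde{\mathbf{x}}(t_{0})+\int_{t_{0}}^{t}(t-\tau)^{\delta-1}E_{\delta,\delta}\!\left(J(t-\tau)^{\delta}\right)\mathbf{h}(\tilde{\mathbf{x}}(\tau))\,d\tau,
\end{equation*}
and combine the decay estimates on the matrix Mittag--Leffler kernels with a generalized Gronwall inequality for weakly singular integrands. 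A small-ball contraction argument then shows that, provided all eigenvalues satisfy (\ref{1.9}), initial data sufficiently close to $(u^{\ast},v^{\ast})$ remain in a neighborhood of the equilibrium and decay to zero. Conversely, if some $\lambda_{i}$ violates (\ref{1.9}), the exponentially growing linear mode dominates the higher-order remainder $\mathbf{h}$ in any small neighborhood of the equilibrium, producing instability.
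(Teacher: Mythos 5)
Your proposal addresses a statement the paper never actually proves: Lemma \ref{Lemma3} is quoted directly from \cite{Matignon1996}, so the comparison here is with the standard literature proof rather than with any in-paper argument. Your first stage is essentially Matignon's original Laplace-transform argument and is sound in outline, with two caveats. First, the Jordan-form reduction decouples the system into scalar Mittag--Leffler equations only when $J$ is diagonalizable; for a nontrivial Jordan block the solution involves terms of the form $t^{k\delta}E_{\delta}^{(k)}(\lambda t^{\delta})$, whose asymptotics must be checked separately (they still decay algebraically when $\left\vert \arg \left( \lambda \right) \right\vert >\delta \pi /2$, but this needs saying). Second, your dichotomy ``algebraic decay when $\left\vert \arg (\lambda )\right\vert >\delta \pi /2$, exponential growth otherwise'' is wrong on the critical rays $\left\vert \arg (\lambda )\right\vert =\delta \pi /2$, where the dominant term $\frac{1}{\delta }\exp (\lambda ^{1/\delta }t)$ is purely oscillatory and bounded; asymptotic stability still fails there, so the ``iff'' survives, but not by the mechanism you state.

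The genuine gap is your second stage. The step ``combine the decay estimates on the Mittag--Leffler kernels with a generalized Gronwall inequality, then run a small-ball contraction'' is precisely the step that fails in the fractional setting, and it is the reason linearized asymptotic stability for Caputo systems remained an open problem long after Matignon's linear theorem. The kernels decay only algebraically, $\Vert E_{\delta }(Jt^{\delta })\Vert =O(t^{-\delta })$, so there is no exponential margin with which to absorb the $o(\Vert \tilde{\mathbf{x}}\Vert )$ remainder as in the classical Poincar\'{e}--Lyapunov argument; moreover, the weakly singular Gronwall lemma you invoke (Ye--Gao--Ding type) bounds the solution by a Mittag--Leffler function of a \emph{positive} argument, i.e., it yields growth, not decay, and a sup-norm contraction over $[t_{0},\infty )$ does not close because the convolution of $(t-\tau )^{\delta -1}E_{\delta ,\delta }$ against a non-decaying bound is not small. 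The known rigorous route replaces this by a Lyapunov--Perron operator construction (Cong, Doan, Siegmund and Tuan, 2016), and the instability direction for the nonlinear system is a separate nontrivial theorem (same authors, 2017): fractional flows have memory and no semigroup property, so the claim that ``the exponentially growing linear mode dominates the remainder'' cannot be pushed through the usual unstable-cone projection without substantial additional work. As written, your plan proves the linear statement (which is all that \cite{Matignon1996}, and hence the paper, actually asserts) but does not establish the nonlinear lifting; either restrict the lemma to the linear case or substitute the Lyapunov--Perron argument for the Gronwall/contraction step.
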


\begin{lemma}
\label{Lemma4}If an equilibrium point $\left( u^{\ast },v^{\ast }\right) $
of (\ref{1.6}) is locally asymptotically stable for the standard system%
\begin{equation}
\left \{ 
\begin{array}{l}
u_{t}=F\left( u,v\right) ,\text{ \ \ \ \ \ \ in }\mathbb{R}^{+},\medskip \\ 
v_{t}=G\left( u,v\right) ,\text{ \ \ \ \ \ in }\mathbb{R}^{+},%
\end{array}%
\right.  \label{1.10}
\end{equation}%
then, it is also locally asymptotically stable for\ (\ref{1.6}).
\end{lemma}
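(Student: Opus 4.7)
The plan is to reduce the claim to Matignon's criterion (Lemma \ref{Lemma3}) by comparing the two sectoral stability conditions on the spectrum of the Jacobian $J(u^{\ast},v^{\ast})$. First, I would invoke the principle of linearized stability for the integer-order system (\ref{1.10}): local asymptotic stability of the equilibrium $(u^{\ast},v^{\ast})$ for the classical ODE forces every eigenvalue $\lambda_{i}$ of $J(u^{\ast},v^{\ast})$ to satisfy $\operatorname{Re}(\lambda_{i})<0$. This in turn is equivalent to the angular condition $|\arg(\lambda_{i})|>\pi/2$ for $i=1,2$, since a complex number lies in the open left half-plane precisely when its argument (taken in $(-\pi,\pi]$) has absolute value greater than $\pi/2$.

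Next, I would use the hypothesis $\delta\in(0,1]$, which immediately gives $\delta\pi/2\le\pi/2$. Chaining the two inequalities yields
\begin{equation*}
|\arg(\lambda_{i})|>\frac{\pi}{2}\ge\frac{\delta\pi}{2},\qquad i=1,2.
\end{equation*}
This is exactly the hypothesis of Lemma \ref{Lemma3}, so Matignon's criterion applies to the Caputo system (\ref{1.6}) and delivers local asymptotic stability of $(u^{\ast},v^{\ast})$ for the fractional dynamics.

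The argument is essentially a one-line comparison of sector widths in the complex plane, so there is no real obstacle; the only point that deserves care is making explicit that the classical local asymptotic stability assumption transfers information to the Jacobian spectrum (i.e. invoking linearized stability so that $\operatorname{Re}(\lambda_{i})<0$ is actually available) rather than merely being a qualitative statement about trajectories. Once that is in place, the containment of the stability sector $\{|\arg z|>\pi/2\}$ inside the fractional stability sector $\{|\arg z|>\delta\pi/2\}$ completes the proof.
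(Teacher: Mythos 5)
Your proposal is correct and follows essentially the same route as the paper's own proof: deduce $\left\vert \arg\left(\lambda_{i}\right)\right\vert > \pi/2$ from classical stability, then observe that $\delta \leq 1$ makes the Matignon sector condition $\left\vert \arg\left(\lambda_{i}\right)\right\vert > \delta\pi/2$ of Lemma \ref{Lemma3} automatic. Your closing remark is in fact slightly more careful than the paper, which also passes from nonlinear local asymptotic stability to $\operatorname{Re}\left(\lambda_{i}\right)<0$ without comment (strictly, this converse direction requires excluding the critical case of eigenvalues on the imaginary axis, a caveat shared by both arguments).
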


\begin{proof}
Assuming that $\left( u^{\ast },v^{\ast }\right) $ is a locally
asymptotically stable equilibrium for (\ref{1.10}), then all the eigenvalues
of the Jacobian matrix have negative real parts, i.e.%
\begin{equation*}
\left \vert \arg \left( \lambda _{i}\right) \right \vert >\frac{\pi }{2},\ \
\ i=1,2.
\end{equation*}%
Since $\delta <1$, it is trivial to see that (\ref{1.9}) holds, which leads
to the local asymptotic stability of $\left( u^{\ast },v^{\ast }\right) $ as
an equilibrium of (\ref{1.6}).
\end{proof}

\begin{corollary}
\label{Corr1}In the diffusion case, if an equilibrium point $\left( u^{\ast
},v^{\ast }\right) $ of (\ref{1.6}) is locally asymptotically stable for the
integer system%
\begin{equation*}
\left \{ 
\begin{array}{l}
u_{t}-d_{1}\Delta u=F\left( u,v\right) ,\text{ \ \ \ \ \ \ in }\mathbb{R}%
^{+}\times \Omega ,\medskip \\ 
v_{t}-d_{2}\Delta v=G\left( u,v\right) ,\text{ \ \ \ \ \ in }\mathbb{R}%
^{+}\times \Omega ,%
\end{array}%
\right.
\end{equation*}%
then it is also locally asymptotically stable for%
\begin{equation*}
\left \{ 
\begin{array}{l}
_{0}^{C}D_{t}^{\delta }u-d_{1}\Delta u=F\left( u,v\right) ,\text{ \ \ \ \ \
\ in }\mathbb{R}^{+}\times \Omega ,\medskip \\ 
_{0}^{C}D_{t}^{\delta }v-d_{2}\Delta v=G\left( u,v\right) ,\text{ \ \ \ in }%
\mathbb{R}^{+}\times \Omega .%
\end{array}%
\right.
\end{equation*}
\end{corollary}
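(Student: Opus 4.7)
The plan is to mimic the proof of Lemma 4, but now working mode-by-mode after diagonalizing the Laplacian. First I would linearize the reaction--diffusion system around the equilibrium $(u^{\ast},v^{\ast})$, writing $w=(u-u^{\ast},v-v^{\ast})^{T}$ and obtaining $w_{t}=D\Delta w+Jw$ in the integer case and ${}_{0}^{C}D_{t}^{\delta}w=D\Delta w+Jw$ in the fractional case, where $D=\mathrm{diag}(d_{1},d_{2})$ and $J=J(u^{\ast},v^{\ast})$ is the reaction Jacobian.

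Next, assuming the standard setting (say homogeneous Neumann boundary conditions on a bounded domain $\Omega$), I would expand $w$ in the orthonormal eigenbasis $\{\phi_{k}\}$ of $-\Delta$ with eigenvalues $0=\mu_{0}\le\mu_{1}\le\mu_{2}\le\cdots$. Writing $w(t,x)=\sum_{k}c_{k}(t)\phi_{k}(x)$ decouples the problem into a countable family of two--dimensional systems
\begin{equation*}
\dot c_{k}=J_{k}c_{k},\qquad {}_{0}^{C}D_{t}^{\delta}c_{k}=J_{k}c_{k},\qquad J_{k}:=J-\mu_{k}D.
\end{equation*}
Local asymptotic stability of $(u^{\ast},v^{\ast})$ for the integer reaction--diffusion system is equivalent to every $J_{k}$ having both eigenvalues in the open left half--plane, i.e.\ $|\arg(\lambda_{i}(J_{k}))|>\pi/2$ for $i=1,2$ and all $k\ge 0$.

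Because $\delta\in(0,1]$, the inequality $\pi/2\ge\delta\pi/2$ is automatic, so the spectral condition of Lemma~\ref{Lemma3} holds for every mode $k$. Thus each Caputo mode equation is asymptotically stable, and summing the modes yields local asymptotic stability of $(u^{\ast},v^{\ast})$ for the fractional reaction--diffusion system. The conclusion then follows exactly as in Lemma~\ref{Lemma4}, except applied to the enlarged spectrum $\{\lambda_{i}(J_{k})\}_{i,k}$ instead of a single Jacobian.

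The main obstacle is conceptual rather than computational: Lemma~\ref{Lemma3} was stated only for finite--dimensional fractional ODEs, whereas here we have infinitely many modes $J_{k}$. One needs the spectrum $\{\lambda_{i}(J_{k})\}$ to stay uniformly bounded away from the sector $|\arg z|\le\delta\pi/2$ in order to pass from mode--wise stability to stability of the PDE. Fortunately, for large $k$ the eigenvalues of $J_{k}=J-\mu_{k}D$ behave like $-\mu_{k}d_{1},-\mu_{k}d_{2}\to-\infty$ on the negative real axis, so only finitely many modes can come close to the stability boundary, and for those the strict inequality $|\arg\lambda|>\pi/2>\delta\pi/2$ gives a uniform gap. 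With this observation the reduction to Lemma~\ref{Lemma3} is justified and the corollary follows.
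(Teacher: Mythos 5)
Your proposal is correct and takes essentially the same route the paper does: the paper states Corollary~\ref{Corr1} without proof, but its argument for Proposition~\ref{Prop2} uses exactly your decomposition into mode matrices $J_i=J-\lambda_i D$ via the Neumann eigenfunction expansion, combined with the sector criterion of Lemma~\ref{Lemma3} and the observation $\pi/2>\delta\pi/2$ from Lemma~\ref{Lemma4}. Your closing remark --- that one needs the spectrum $\{\lambda_i(J_k)\}$ uniformly separated from the sector $\left\vert \arg z\right\vert \leq \delta\pi/2$, which holds because the eigenvalues of $J-\mu_k D$ escape to $-\infty$ along the negative real axis for large $k$ --- supplies a genuine detail the paper silently glosses over when passing from mode-wise stability to stability of the infinite-dimensional system.
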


\section{System Model\label{SecModel}}

In this paper, we consider the time fractional Lengyel--Epstein system%
\begin{equation}
\left\{ 
\begin{array}{l}
_{0}^{C}D_{t}^{\delta }u-d_{1}\Delta u=a-u-\frac{4uv}{1+u^{2}}=:F\left(
u,v\right) ,\text{ \ \ \ \ \ \ in }\mathbb{R}^{+}\times \Omega ,\medskip  \\ 
_{0}^{C}D_{t}^{\delta }v-d_{2}\Delta v=\sigma b\left( u-\frac{uv}{1+u^{2}}%
\right) =:G\left( u,v\right) ,\text{ \ \ \ in }\mathbb{R}^{+}\times \Omega ,%
\end{array}%
\right.   \label{2.1}
\end{equation}%
where $\Omega $ is a bounded domain in $%
\mathbb{R}
^{n}$ ($n=2,3$ in practice) with smooth boundary $\partial \Omega $, $\Delta
=\underset{i=1}{\overset{n}{\sum }}\frac{\partial ^{2}}{\partial x_{i}^{2}}$%
, $0<\delta \leq 1$\ is the fractional order, $_{0}^{C}D_{t}^{\delta }$
denotes the Caputo fractional derivative over $\left( 0,\infty \right) $ as
defined in (\ref{1.2}), and $d_{1},d_{2},a$ and $\sigma $ are strictly
positive constants. We assume the nonnegative initial conditions%
\begin{equation}
0\leq u\left( 0,x\right) =u_{0}\left( x\right) ,\text{ \ }0\leq v\left(
0,x\right) =v_{0}\left( x\right) ,\text{\ \ \ \ in }\Omega ,  \label{2.1.0}
\end{equation}%
with $u_{0},v_{0}\in C^{2}\left( \Omega \right) \cap C\left( \overline{%
\Omega }\right) $, and impose homogeneous Neumann boundary conditions%
\begin{equation}
\dfrac{\partial u}{\partial \nu }=\dfrac{\partial v}{\partial \nu }=0\ \ 
\text{\ \ \ \ \ on \ \ \ }\mathbb{R}^{+}\times \partial \Omega ,
\label{2.1.1}
\end{equation}%
where $\nu $ is the unit outer normal to $\partial \Omega $.

Before we study the local and global asymptotic stability of the solutions
of the proposed system, let us define its invariant region. We start with a
definition of the term invariant region following the lines of \cite%
{Mottoni1979,Yi2009}. Note that when $F\left( u,v\right) =0$, the curves in
the $u$--$v$ plane are called $u$--isoclines. Similarly, they are called $v$%
--isoclines when $G\left( u,v\right) =0$. in addition, if the vector field $%
\left( F,G\right) $ does not point outwards at the boundary of a certain
rectabgle $\partial \Re $, then $\Re $ is said to be an invariant rectangle.
This is similar to following definition.

\begin{definition}
A rectangle $\Re $ is said to be an invariant rectangle if the vector field $%
\left( F,G\right) $ on the boundary $\partial \Re $ points inside, i.e.%
\begin{equation}
\left \{ 
\begin{array}{l}
F\left( 0,v\right) \geq 0\text{ and }F\left( r_{1},v\right) \leq 0\text{ for 
}0<v<r_{2},\medskip \\ 
G\left( u,0\right) \geq 0\text{ and }G\left( u,r_{2}\right) \leq 0\text{ for 
}0<u<r_{1}.%
\end{array}%
\right.  \label{2.2}
\end{equation}
\end{definition}

The following proposition describes the invariant region of the proposed
system (\ref{2.1}).

\begin{proposition}
\label{Prop0}System (\ref{2.1}) admits the region of attraction%
\begin{equation}
\Re _{a}=\left( 0,a\right) \times \left( 0,1+a^{2}\right) .  \label{2.3}
\end{equation}
\end{proposition}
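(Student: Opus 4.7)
The plan is to directly verify the four sign conditions in the definition of an invariant rectangle for the choice $r_{1}=a$ and $r_{2}=1+a^{2}$, with the nonlinearities $F$ and $G$ as given in \eqref{2.1}. Since the definition captures precisely the geometric fact that the vector field on $\partial \Re$ points inward, once the four inequalities are checked, the rectangle $\Re_{a}$ is invariant, and together with the nonnegativity preservation coming from the first two inequalities along the axes, one concludes that $\Re_{a}$ attracts nonnegative trajectories.

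First I would compute $F(0,v) = a \ge 0$, which is immediate because both the $-u$ and $-4uv/(1+u^{2})$ terms vanish when $u=0$. Next, at the right edge $u=a$, I would note that the $a-u$ contribution cancels, leaving
\begin{equation*}
F(a,v) \;=\; -\,\frac{4 a v}{1+a^{2}} \;\le\; 0 \qquad \text{for } 0<v<1+a^{2}.
\end{equation*}
For $G$, the bottom edge is trivial since $G(u,0)=\sigma b\, u \ge 0$ for $u\ge 0$.

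The only calculation that is not a one-liner is the top edge $v = 1+a^{2}$. Here I would factor
\begin{equation*}
G(u,1+a^{2}) \;=\; \sigma b\, u\!\left(1-\frac{1+a^{2}}{1+u^{2}}\right) \;=\; \sigma b\, u \cdot \frac{u^{2}-a^{2}}{1+u^{2}},
\end{equation*}
and observe that for $0<u<a$ the factor $u^{2}-a^{2}$ is strictly negative, so $G(u,1+a^{2})\le 0$. This is the main (though still elementary) obstacle, since it is the only place where the specific choice $r_{2}=1+a^{2}$ is actively used; any smaller value of $r_{2}$ would fail to make $G$ nonpositive on the top edge for all $u\in(0,a)$.

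With the four inequalities \eqref{2.2} verified, the rectangle $\Re_{a}=(0,a)\times(0,1+a^{2})$ satisfies the definition of an invariant rectangle, and since nonnegative data remain nonnegative (from the conditions on the axes), $\Re_{a}$ is an attracting region for the fractional system \eqref{2.1}. I expect the argument to carry over to the Caputo setting without modification because the inward-pointing condition is purely pointwise on $F$ and $G$, and nonnegativity under Caputo dynamics with nonnegative forcing on the boundary of the positive cone follows from Lemma~\ref{Lemma2} applied componentwise.
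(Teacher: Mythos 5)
Your four sign checks are correct and, since the paper states Proposition \ref{Prop0} without any proof, verifying \eqref{2.2} with $r_{1}=a$ and $r_{2}=1+a^{2}$ is evidently exactly what the authors intend: $F(0,v)=a>0$, $F(a,v)=-4av/(1+a^{2})\leq 0$, $G(u,0)=\sigma b\,u\geq 0$, and on the top edge the factorization
\begin{equation*}
G\left( u,1+a^{2}\right) =\sigma b\,u\,\frac{u^{2}-a^{2}}{1+u^{2}}\leq 0\qquad \text{for }0<u<a
\end{equation*}
is indeed the only step that uses the specific value $r_{2}=1+a^{2}$; this is the classical computation for the Lengyel--Epstein kinetics, cf.\ \cite{Yi2009}. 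So on the core calculation there is nothing to object to, and no divergence from the paper is possible, since the paper supplies no argument at all.

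Two repairs are needed in your final paragraph, however. First, Lemma \ref{Lemma2} cannot do the job you assign to it: it is the quadratic-Lyapunov inequality ${}_{t_{0}}^{C}D_{t}^{\delta }u^{2}\leq 2u\,{}_{t_{0}}^{C}D_{t}^{\delta }u$, used later in the global stability proof, and it yields no positivity or comparison principle whatsoever. Preservation of the rectangle for the full system \eqref{2.1}, which contains diffusion, rests on a maximum principle for Caputo time-fractional parabolic operators, of the type in the cited reference \cite{Alsaedi2015}, applied componentwise (the rectangle shape matters precisely because the diffusion is diagonal, so each component can be estimated separately). Second, inward-pointing of $\left( F,G\right) $ on $\partial \Re _{a}$ gives forward \emph{invariance} only; the proposition asserts a region of \emph{attraction}, i.e.\ that nonnegative solutions starting outside $\Re _{a}$ eventually enter it. In the classical case $\delta =1$ this is proved by an extra comparison argument --- one shows $\limsup u\leq a$ from the first equation and feeds that bound into the second, as in \cite{Yi2009} --- and in the fractional setting the same scheme requires a fractional comparison principle; your remark about ``nonnegativity along the axes'' does not substitute for this step. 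With those two points fixed, your write-up would fully justify the statement as phrased.
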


\section{Asymptotic Stability Conditions\label{SecStab}}

\subsection{Local Stability}

In this section, we derive sufficient conditions for the local asymptotic
stability of the equilibrium point of (\ref{2.1}). The free diffusions
system corresponding to (\ref{2.1}) is%
\begin{equation}
\left \{ 
\begin{array}{l}
_{0}^{C}D_{t}^{\delta }u=a-u-\frac{4uv}{1+u^{2}},\medskip \\ 
_{0}^{C}D_{t}^{\delta }v=\sigma b\left( u-\frac{uv}{1+u^{2}}\right) .%
\end{array}%
\right.  \label{3.0}
\end{equation}

\begin{proposition}
\label{Prop1}System (\ref{3.0}) has the unique equilibrium%
\begin{equation}
\left( u^{\ast },v^{\ast }\right) =\left( \alpha ,1+\alpha ^{2}\right) ,
\label{3.1}
\end{equation}%
with 
\begin{equation}
\alpha =\frac{a}{5}.  \label{3.2}
\end{equation}%
Subject to 
\begin{equation*}
\Upsilon =\left( \frac{3\alpha ^{2}-5-\sigma b\alpha }{1+\alpha ^{2}}\right)
^{2}-20\frac{\sigma b\alpha }{\alpha ^{2}+1}\geq 0,
\end{equation*}%
$\left( u^{\ast },v^{\ast }\right) $ is asymptotically stable if%
\begin{equation*}
\text{tr}J<0,
\end{equation*}%
and unstable if%
\begin{equation*}
\text{tr}J>0,
\end{equation*}%
where%
\begin{equation*}
J=\left( 
\begin{array}{cc}
\frac{3\alpha ^{2}-5}{1+\alpha ^{2}} & -\frac{4\alpha }{1+\alpha ^{2}} \\ 
\sigma b\frac{2\alpha ^{2}}{1+\alpha ^{2}} & -\sigma b\frac{\alpha }{%
1+\alpha ^{2}}%
\end{array}%
\right) .
\end{equation*}%
Alternatively, if $\Upsilon <0$, then $\left( u^{\ast },v^{\ast }\right) $\
is asymptotically stable whenever tr$J\leq 0$ or%
\begin{equation}
\left \vert \arg \left( \lambda _{1}\right) \right \vert >\delta \frac{\pi }{%
2}\text{ and }\left \vert \arg \left( \lambda _{2}\right) \right \vert
>\delta \frac{\pi }{2},  \label{3.3}
\end{equation}%
where%
\begin{equation}
\lambda _{1,2}=\frac{1}{2}\left[ \frac{3\alpha ^{2}-5-\sigma b\alpha }{%
1+\alpha ^{2}}\pm i\sqrt{-\Upsilon }\right] .  \label{3.4}
\end{equation}
\end{proposition}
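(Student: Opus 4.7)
The plan is to follow a standard two--step route: first locate the equilibrium by solving $F(u,v)=G(u,v)=0$ algebraically, then linearize and invoke Matignon's criterion (Lemma~\ref{Lemma3}) to read off the stability condition.

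For the first step, I would start from $G(u,v)=\sigma b\,u\bigl(1-\tfrac{v}{1+u^{2}}\bigr)=0$. The root $u=0$ forces $F(0,v)=a>0$, a contradiction, so necessarily $v=1+u^{2}$. Substituting into $F=0$ collapses the fraction $\tfrac{4uv}{1+u^{2}}$ to $4u$, yielding the affine equation $a-5u=0$. This gives $u^{\ast}=a/5=\alpha$ and $v^{\ast}=1+\alpha^{2}$, simultaneously proving existence and uniqueness.

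For the second step, I would compute the partial derivatives of $F$ and $G$ at $(u^{\ast},v^{\ast})$ to recover the stated matrix $J$; the identity $v^{\ast}=1+\alpha^{2}$ is what makes the entries collapse to clean rational functions of $\alpha$. Then I would compute
\[
\operatorname{tr}J=\frac{3\alpha^{2}-5-\sigma b\alpha}{1+\alpha^{2}},\qquad \det J=\frac{5\sigma b\alpha}{1+\alpha^{2}}>0,
\]
so the characteristic polynomial $\lambda^{2}-(\operatorname{tr}J)\lambda+\det J=0$ has discriminant exactly $\Upsilon$. A sign analysis now splits into two cases. When $\Upsilon\ge 0$, both eigenvalues are real and share the sign of $\operatorname{tr}J$ (since $\det J>0$); Lemma~\ref{Lemma3} then reduces to the familiar Routh condition $\operatorname{tr}J<0$ for stability and $\operatorname{tr}J>0$ for instability, regardless of $\delta$. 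When $\Upsilon<0$, the eigenvalues form the complex conjugate pair displayed in \eqref{3.4} with real part $\tfrac{1}{2}\operatorname{tr}J$; if $\operatorname{tr}J\le 0$ then $|\arg\lambda_{i}|\ge \pi/2\ge \delta\pi/2$, giving stability for free, while for $\operatorname{tr}J>0$ one has to verify \eqref{3.3} directly by hand, which is precisely the fractional refinement over the classical picture.

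The arguments are essentially routine algebra plus Matignon's theorem, so I do not anticipate any genuine obstacle. The one point requiring minor care is keeping the sign of $\det J$ in view throughout the $\Upsilon\ge 0$ branch (to ensure the two real eigenvalues cannot straddle zero), and, in the $\Upsilon<0$ branch with $\operatorname{tr}J>0$, being explicit that stability is \emph{conditional} on the fractional order $\delta$ being small enough that $|\arg\lambda_{i}|=\arctan\!\bigl(\sqrt{-\Upsilon}/\operatorname{tr}J\bigr)$ still exceeds $\delta\pi/2$. This is the only place where the proposition genuinely exploits the fractional setting rather than reducing to Lemma~\ref{Lemma4}.
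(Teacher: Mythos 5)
Your proposal is correct and follows essentially the same route as the paper --- Jacobian at $(\alpha,1+\alpha^{2})$, characteristic polynomial with discriminant $\Upsilon$, case split on the signs of $\Upsilon$ and $\text{tr}J$ using $\det J>0$, and Matignon's criterion (Lemma \ref{Lemma3}) --- with only cosmetic differences: you derive the equilibrium explicitly (the paper omits this step), and in the complex case with $\text{tr}J<0$ you apply Matignon directly where the paper invokes Lemma \ref{Lemma4}. One small precision point: your chain $\left\vert \arg \lambda _{i}\right\vert \geq \pi /2\geq \delta \pi /2$ is non-strict while Lemma \ref{Lemma3} requires strict inequality, so at the corner $\text{tr}J=0$, $\delta =1$ (purely imaginary eigenvalues) asymptotic stability does not follow --- the paper's proof handles exactly this subcase by restricting to $\delta <1$.
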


\begin{proof}
The Jacobian matrix in $\left( u^{\ast },v^{\ast }\right) $ is given by%
\begin{equation*}
J\left( u^{\ast },v^{\ast }\right) =\left( 
\begin{array}{cc}
\frac{3\alpha ^{2}-5}{1+\alpha ^{2}} & -\frac{4\alpha }{1+\alpha ^{2}} \\ 
\sigma b\frac{2\alpha ^{2}}{1+\alpha ^{2}} & -\sigma b\frac{\alpha }{%
1+\alpha ^{2}}%
\end{array}%
\right) .
\end{equation*}%
Its determinant and trace are given by%
\begin{equation*}
\det J\left( u^{\ast },v^{\ast }\right) =5\sigma b\frac{\alpha }{\alpha
^{2}+1},
\end{equation*}%
and%
\begin{equation*}
\text{tr}J\left( u^{\ast },v^{\ast }\right) =\frac{3\alpha ^{2}-5-\sigma
b\alpha }{1+\alpha ^{2}},
\end{equation*}%
respectively.

The characteristic equation of the Jacobian matrix is%
\begin{equation*}
\lambda ^{2}-\left( \text{tr}J\right) \lambda +\det J=0,
\end{equation*}%
and its discriminant is%
\begin{equation*}
\Upsilon =\left( \text{tr}J\right) ^{2}-4\det J.
\end{equation*}%
We study the different cases separately. First, if $\Upsilon >0$, then the
eigenvalues $\lambda _{1,2}$ are real and can be rewritten as%
\begin{equation*}
\lambda _{1,2}=\frac{1}{2}\left[ \text{tr}J\pm \sqrt{\Upsilon }\right] .
\end{equation*}%
Note that $\det J>0$. Hence, the negativity of the eigenvalues rests on the
sign of the trace $\text{tr}J$:

\begin{itemize}
\item If $\text{tr}J<0$, then 
\begin{equation*}
\lambda_{1}=\frac{1}{2}\left[ \text{tr}J-\sqrt{\Upsilon}\right] <0,
\end{equation*}
and, therefore, $\arg \left( \lambda_{1}\right) =\pi$. Since both
eigenvalues are real, the trace is negative, and the determinant is
positive, it is evident that $\left \vert \arg \left( \lambda_{2}\right)
\right \vert =\left \vert \arg \left( \lambda_{1}\right) \right \vert =\pi>%
\frac{\delta \pi }{2}$ as $\delta \in \left( 0,1\right] $. It follows that
the equilibrium $\left( u^{\ast},v^{\ast}\right) $ is asymptotically stable.

\item If $\text{tr}J>0$, we have%
\begin{equation*}
\text{tr}J-\sqrt{\Upsilon }>0,
\end{equation*}%
leading to%
\begin{equation*}
\lambda _{1}=\frac{1}{2}\left[ \text{tr}J-\sqrt{\Upsilon }\right] >0,
\end{equation*}%
and thus%
\begin{equation*}
\left \vert \arg \left( \lambda _{1}\right) \right \vert =0.
\end{equation*}%
So, $\left( u^{\ast },v^{\ast }\right) $ is asymptotically unstable.

\item If $\text{tr}J=0$, then 
\begin{equation*}
\Upsilon >0\Rightarrow -4\det J>0,
\end{equation*}%
which is a contradiction. Hence, this case does not show up.
\end{itemize}

Next, we consider the case of the discriminant $\Upsilon $ being equal to
zero. Since $\det J>0$, then it is impossible that $\text{tr}J=0$. The
eigenvalues reduce to%
\begin{equation*}
\lambda _{1,2}=\frac{1}{2}\text{tr}J.
\end{equation*}%
The sign of the eigenvalues is identical to that of the trace. Consequently, 
$\left( u^{\ast },v^{\ast }\right) $ is asymptotically stable for all $%
\delta \in \left( 0,1\right] $ if $\text{tr}J<0$ and unstable if $\text{tr}%
J>0$.

Finally, if the discriminant $\Upsilon <0$, then%
\begin{align*}
\lambda _{1,2}& =\frac{1}{2}\left[ \text{tr}J\pm \sqrt{\Upsilon }\right] \\
& =\frac{1}{2}\left[ \text{tr}J\pm i\sqrt{-\Upsilon }\right] .
\end{align*}%
We, now, have three cases:

\begin{itemize}
\item If tr$J<0$, then by means of Lemma \ref{Lemma4}, $\left( u^{\ast
},v^{\ast }\right) $ is asymptotically stable.

\item If $\text{tr}J=0$, then%
\begin{equation*}
\left \vert \arg \left( \lambda _{1,2}=\pm \frac{1}{2}i\sqrt{-\Upsilon }%
\right) \right \vert =\frac{\pi }{2}.
\end{equation*}%
Hence, for\ $\delta <1$, $\left( u^{\ast },v^{\ast }\right) $\ is
asymptotically stable.

\item If tr$J>0$, then $\left( u^{\ast },v^{\ast }\right) $ is
asymptotically stable subject to (\ref{3.3}).
\end{itemize}

The proof is complete.
\end{proof}

Now, let us move on to the complete system (\ref{2.1}). For this, we are
going to use the eigenfunction expansion method \cite{Casten1977}. We denote
the eigenvalues of the spectral problem with Neumann boundary conditions by $%
0=\lambda _{0}\leq \lambda _{1}\leq \lambda _{2}\leq \cdots \leq \lambda
_{k}\leq \cdots $ and the corresponding normalized eigenfunctions by $\phi
_{0},\cdots ,\phi _{k},\cdots $. Let us set%
\begin{equation}
J_{i}=\left( 
\begin{array}{cc}
F_{0}-d_{1}\lambda _{i} & F_{1} \\ 
\sigma G_{0} & \sigma G_{1}-d_{2}\lambda _{i}%
\end{array}%
\right) ,  \label{3.6.1}
\end{equation}%
and%
\begin{equation}
L=\left( 
\begin{array}{cc}
d_{1}\Delta +F_{0} & F_{1} \\ 
\sigma G_{0} & d_{2}\Delta +\sigma G_{1}%
\end{array}%
\right) ,  \label{3.6.2}
\end{equation}%
where%
\begin{equation}
F_{0}=\frac{3\alpha ^{2}-5}{1+\alpha ^{2}},\ F_{1}=-\frac{4\alpha }{1+\alpha
^{2}},\ G_{0}=b\frac{2\alpha ^{2}}{1+\alpha ^{2}},\ \text{and}\ G_{1}=-b%
\frac{\alpha }{1+\alpha ^{2}}.  \label{3.6.3}
\end{equation}%
In addittion, if\ $d_{1}>d_{2}$, we define $\lambda _{01}<\lambda _{02}$\ as
the roots of%
\begin{equation}
\Upsilon _{i}=\left( d_{1}-d_{2}\right) ^{2}\lambda _{i}^{2}+2\left(
d_{1}-d_{2}\right) \left( -F_{0}+\sigma G_{1}\right) \lambda _{i}+\Upsilon .
\label{3.6.4}
\end{equation}%
The following proposition describes the conditions for the asymptotic
stability of the steady state assuming $F_{0}>0$.

\begin{proposition}
\label{Prop2}If $d_{1}=d_{2}$, then the asymptotic stability conditions are
identical to the free diffusions case as stated in Proposition \ref{Prop1}.
Alternatively, if $d_{1}\not=d_{2}$, $\text{tr}J<0$ and $\Upsilon >0$, then $%
\left( u^{\ast },v^{\ast }\right) $ is an asymptotically stable constant
steady state if $d_{1}<d_{2}$ and%
\begin{equation}
\left \{ 
\begin{array}{lll}
\lambda _{1}d_{1}\geq F_{0}, & \text{or} & \medskip \\ 
\lambda _{1}d_{1}<F_{0} & \text{and} & 0<d_{2}<\tilde{d},%
\end{array}%
\right.  \label{3.7}
\end{equation}%
where%
\begin{equation}
d_{i}=\sigma b\frac{\alpha }{1+\alpha ^{2}}\frac{\left( \lambda
_{i}d_{1}+5\right) }{\left( F_{0}-\lambda _{i}d_{1}\right) \lambda _{i}},
\label{3.8}
\end{equation}%
and%
\begin{equation}
\tilde{d}=\min_{i\geq 0}d_{i}.  \label{3.9}
\end{equation}%
If\ $d_{1}>d_{2}$, the euilibrium $\left( u^{\ast },v^{\ast }\right) $\ is
asymptotically stable if $\lambda _{1}d_{1}\geq F_{0}$\ and the eigenvalues%
\begin{equation}
\xi _{1,2}\left( \lambda _{i}\right) =\frac{1}{2}\left[ \text{tr}J_{i}\pm i%
\sqrt{4\det J_{i}-\left( \text{tr}J_{i}\right) ^{2}}\right]  \label{3.11}
\end{equation}%
satisfy%
\begin{equation}
\left \vert \arg \left( \xi _{1}\left( \lambda _{i}\right) \right) \right
\vert >\delta \frac{\pi }{2}\text{ and }\left \vert \arg \left( \xi
_{2}\left( \lambda _{i}\right) \right) \right \vert >\delta \frac{\pi }{2}
\label{3.12}
\end{equation}%
for all $\lambda _{i}\in \left( \lambda _{01},\lambda _{02}\right) $.
\end{proposition}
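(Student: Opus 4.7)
The plan is to apply the Neumann-eigenfunction expansion method. Decomposing perturbations of $(u^{\ast},v^{\ast})$ in the $L^{2}$-orthonormal basis $\{\phi_{k}\}$ of eigenfunctions of $-\Delta$ with eigenvalues $\lambda_{k}$, the linearization of (\ref{2.1}) decouples into a countable family of two-dimensional Caputo fractional systems $_{0}^{C}D_{t}^{\delta}(c_{k},e_{k})^{\top}=J_{k}(c_{k},e_{k})^{\top}$ with $J_{k}$ as in (\ref{3.6.1}). By Lemma \ref{Lemma3}, local asymptotic stability of $(u^{\ast},v^{\ast})$ is equivalent to $|\arg\xi_{j}^{(k)}|>\delta\pi/2$ for both eigenvalues of every $J_{k}$, $k\ge 0$. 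I would first record $\text{tr}\,J_{k}=\text{tr}\,J-(d_{1}+d_{2})\lambda_{k}$ and $\det J_{k}=d_{1}d_{2}\lambda_{k}^{2}-(F_{0}d_{2}+d_{1}\sigma G_{1})\lambda_{k}+\det J$, and verify by direct expansion that $(\text{tr}\,J_{k})^{2}-4\det J_{k}$ coincides with the quadratic $\Upsilon_{k}$ of (\ref{3.6.4}). Since $\text{tr}\,J<0$ by hypothesis and $\lambda_{k}\ge 0$, one has $\text{tr}\,J_{k}<0$ for every $k$, so whenever $\det J_{k}>0$ both eigenvalues of $J_{k}$ have strictly negative real part and Matignon's inequality is automatic for $\delta\in(0,1]$.

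The case $d_{1}=d_{2}=d$ is then immediate: $J_{k}=J-d\lambda_{k}I$ shifts the spectrum of $J$ leftward by $d\lambda_{k}\ge 0$, which cannot reduce any $|\arg\xi|$, and Proposition \ref{Prop1} transfers directly. For $d_{1}<d_{2}$ the whole argument reduces to securing $\det J_{k}>0$. Viewing $\det J_{k}$ as an affine function of $d_{2}$ at fixed $\lambda_{k}$, its slope is $\lambda_{k}(d_{1}\lambda_{k}-F_{0})$. If $\lambda_{1}d_{1}\ge F_{0}$, then for every $k\ge 1$ both factors $(F_{0}-d_{1}\lambda_{k})\le 0$ and $(\sigma G_{1}-d_{2}\lambda_{k})<0$, so their product is nonnegative, and combined with $-\sigma F_{1}G_{0}>0$ this forces $\det J_{k}>0$. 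If instead $\lambda_{1}d_{1}<F_{0}$, the slope is negative on the modes with $d_{1}\lambda_{k}<F_{0}$; solving $\det J_{k}=0$ in $d_{2}$ produces the threshold (\ref{3.8}), and the uniform bound $d_{2}<\tilde{d}$ keeps every such $\det J_{k}$ positive, while modes with $d_{1}\lambda_{k}\ge F_{0}$ are handled by the previous sign argument.

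For $d_{1}>d_{2}$, the discriminant $\Upsilon_{k}$ is an upward parabola in $\lambda_{k}$ with $\Upsilon_{0}=\Upsilon>0$ and, under $F_{0}>0$, a negative linear coefficient $2(d_{1}-d_{2})(-F_{0}+\sigma G_{1})$, so it may dip below zero on a bounded band $(\lambda_{01},\lambda_{02})$ delimited by the two positive roots of (\ref{3.6.4}). Outside the band the eigenvalues of $J_{k}$ are real; $\lambda_{1}d_{1}\ge F_{0}$ combined with the sign argument of the previous paragraph gives $\det J_{k}>0$, and $\text{tr}\,J_{k}<0$ then yields $|\arg\xi_{j}^{(k)}|=\pi$. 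Inside the band the eigenvalues form the complex conjugate pair (\ref{3.11}), and condition (\ref{3.12}) is precisely Matignon's inequality applied to this pair. The chief obstacle will be the bookkeeping that isolates the residual band $(\lambda_{01},\lambda_{02})$ as the only place where the angle inequality needs to be checked, which hinges on showing that $\det J_{k}>0$ holds uniformly on both sides of the band under the standing hypothesis $\lambda_{1}d_{1}\ge F_{0}$, so that the real-eigenvalue modes do not impose any additional constraint beyond those already captured by the free-diffusion analysis of Proposition \ref{Prop1}.
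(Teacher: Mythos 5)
Your proposal is correct and follows essentially the same route as the paper: the Casten--Holland eigenfunction expansion reducing the linearization to the per-mode matrices $J_i$ of (\ref{3.6.1}), the trace/determinant/discriminant computation yielding $\Upsilon_i$ as in (\ref{3.6.4}), the case split $d_1=d_2$, $d_1<d_2$, $d_1>d_2$ with the $d_2$-threshold (\ref{3.8}) obtained by solving $\det J_i=0$, and Matignon's criterion (Lemma \ref{Lemma3}) checked only on the residual band $(\lambda_{01},\lambda_{02})$. Your version even tidies two small points in the paper's proof --- you note that $\mathrm{tr}\,J_k<0$ together with $\det J_k>0$ already forces $|\arg\xi|>\delta\pi/2$ without first establishing realness of the eigenvalues, and you correctly take the linear coefficient $2(d_1-d_2)(-F_0+\sigma G_1)$ to be negative when $d_1>d_2$ (the paper's displayed sign there is a typo, though its conclusion $0<\lambda_{01}\leq\lambda_{02}$ is the right one) --- but these are refinements of the same argument, not a different approach.
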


\begin{proof}
In order to study the local asymptotic stability in the PDE sense, we will
linearize the system. Following the standard linear operator theory (see 
\cite{Casten1977}), and keeping in mind the fractional nature of the system,
we can state that $\left( u^{\ast },v^{\ast }\right) $ is asymptotically
stable if the eigenvalues of the linearized system satisfy the conditions of
Lemma \ref{Lemma3}.

Suppose that $\left( \phi \left( x\right) ,\psi \left( x\right) \right) $ is
an eigenfunction of $L$ corresponding to the eigenvalue $\xi $. Then,%
\begin{equation*}
\left( 
\begin{array}{cc}
d_{1}\Delta +F_{0}-\xi \left( \lambda _{i}\right) & F_{1} \\ 
\sigma G_{0} & d_{2}\Delta +\sigma G_{1}-\xi \left( \lambda _{i}\right)%
\end{array}%
\right) \left( 
\begin{array}{c}
\phi \\ 
\psi%
\end{array}%
\right) =\left( 
\begin{array}{c}
0 \\ 
0%
\end{array}%
\right) .
\end{equation*}%
With%
\begin{equation*}
\phi =\sum_{0\leq i\leq \infty ,1\leq j\leq m_{i}}a_{ij}\Phi _{ij}\text{ and 
}\psi =\sum_{0\leq i\leq \infty ,1\leq j\leq m_{i}}b_{ij}\Phi _{ij},
\end{equation*}%
we obtain%
\begin{equation*}
\sum_{0\leq i\leq \infty ,1\leq j\leq m_{i}}\left( 
\begin{array}{cc}
F_{0}-d_{1}\lambda _{i}-\xi \left( \lambda _{i}\right) & F_{1} \\ 
\sigma G_{0} & \sigma G_{1}-d_{2}\lambda _{i}-\xi \left( \lambda _{i}\right)%
\end{array}%
\right) \left( 
\begin{array}{c}
a_{ij} \\ 
b_{ij}%
\end{array}%
\right) \Phi _{ij}=\left( 
\begin{array}{c}
0 \\ 
0%
\end{array}%
\right) .
\end{equation*}%
It holds that%
\begin{equation*}
\left( 
\begin{array}{cc}
F_{0}-d_{1}\lambda _{i}-\xi \left( \lambda _{i}\right) & F_{1} \\ 
\sigma G_{0} & \sigma G_{1}-d_{2}\lambda _{i}-\xi \left( \lambda _{i}\right)%
\end{array}%
\right) =J_{i}-\xi \left( \lambda _{i}\right) I,
\end{equation*}%
with $J_{i}$ as defined in (\ref{3.6.1}). The characteristic equation of
matrix $J_{i}$ is%
\begin{equation}
\xi ^{2}\left( \lambda _{i}\right) -\text{tr}J_{i}\text{ }\xi \left( \lambda
_{i}\right) +\det J_{i}=0,  \label{3.15}
\end{equation}%
where%
\begin{equation*}
\text{tr}J_{i}=-\left( d_{1}+d_{2}\right) \lambda _{i}+\text{tr}J,
\end{equation*}%
and%
\begin{equation*}
\det J_{i}=\left( \lambda _{i}d_{1}-F_{0}\right) \lambda _{i}d_{2}+\frac{%
\sigma b\alpha }{1+\alpha ^{2}}\left( \lambda _{i}d_{1}+5\right) .
\end{equation*}%
In order to investigate the stability of $\left( u^{\ast },v^{\ast }\right) $%
, we examine the nature of the eigenvalues by taking the discriminant of (%
\ref{3.15}), which is given by%
\begin{align*}
\Upsilon _{i}& =\left( \text{tr}J_{i}\right) ^{2}-4\det J_{i}\medskip \\
& =\left( d_{1}-d_{2}\right) ^{2}\lambda _{i}^{2}+2\left( d_{1}-d_{2}\right)
\left( -F_{0}+\sigma G_{1}\right) \lambda _{i}+\left( \left( -F_{0}+\sigma
G_{1}\right) ^{2}+4\sigma F_{1}G_{0}\right) \medskip \\
& =\left( d_{1}-d_{2}\right) ^{2}\lambda _{i}^{2}+2\left( d_{1}-d_{2}\right)
\left( -F_{0}+\sigma G_{1}\right) \lambda _{i}+\Upsilon .
\end{align*}%
The sign of $\Upsilon _{i}$ is important for the stability of $\left(
u^{\ast },v^{\ast }\right) $. The discriminant of $\Upsilon _{i}$ with
respect to $\lambda _{i}$ is%
\begin{equation*}
\Delta _{\lambda }=32\left( d_{1}-d_{2}\right) ^{2}\sigma b\frac{\alpha ^{3}%
}{\left( 1+\alpha ^{2}\right) ^{2}}.
\end{equation*}%
We have a number of cases for $\Delta _{\lambda }$:

\begin{itemize}
\item If $d_{1}=d_{2}$, we notice that 
\begin{equation*}
\Upsilon _{i}=\Upsilon _{0}=\Upsilon .
\end{equation*}%
Hence, the exact same conditions for OFDE stability as described in
Proposition \ref{Prop1} apply here.

\item If $d_{1}\not =d_{2}$, then $\Delta_{\lambda}>0$. Hence, $\Upsilon_{i}$
has two real roots and we have two cases:

\begin{itemize}
\item If $d_{1}<d_{2}$, then using tr$J_{i}>0$, we have%
\begin{equation*}
2\left( d_{1}-d_{2}\right) \left( -F_{0}+\sigma G_{1}\right) >0.
\end{equation*}%
Thus, since $\Upsilon >0$, the solutions $\lambda _{01}$ and $\lambda _{02}$
of the equation $\Upsilon _{i}=0$ are both negative regardless of $i$.
Hence, $\Upsilon _{i}>0$ for all $i$ and the roots of (\ref{3.15})%
\begin{equation*}
\xi _{1}\left( \lambda _{i}\right) =\frac{\text{tr}J_{i}-\sqrt{\left( \text{%
tr}J_{i}\right) ^{2}-4\det J_{i}}}{2},
\end{equation*}%
and%
\begin{equation*}
\xi _{2}\left( \lambda _{i}\right) =\frac{\text{tr}J_{i}+\sqrt{\left( \text{%
tr}J_{i}\right) ^{2}-4\det J_{i}}}{2}.
\end{equation*}%
are real. Note that 
\begin{equation*}
\text{tr}J<0\Rightarrow \text{tr}J_{i}<0,
\end{equation*}%
which leads to $\xi _{1}\left( \lambda _{i}\right) <0$. Also, if $\lambda
_{1}d_{1}\geq F_{0}$, then $\xi _{2}\left( \lambda _{i}\right) <0$. This
leads to 
\begin{equation*}
\left \vert \arg \left( \xi _{1}\left( \lambda _{i}\right) \right) \right
\vert =\left \vert \arg \left( \xi _{2}\left( \lambda _{i}\right) \right)
\right \vert =\pi ,
\end{equation*}%
which guarantees the asymptotic stability of $\left( u^{\ast },v^{\ast
}\right) $.

Alternatively, if $\lambda _{1}d_{1}<F_{0}$ and $0<d_{2}<\tilde{d}$, then%
\begin{equation*}
\lambda _{i}d_{1}<F_{0}\text{ and }d_{2}<d_{i}\text{ for }i\in \left[
1,i_{\alpha }\right] .
\end{equation*}%
It follows that $\det J_{i}>0$ for all $i\in \left[ 1,i_{\alpha }\right] $.
Furthermore, if $i>i_{\alpha }$ then $\lambda _{i}d_{1}\geq F_{0}$ and $\det
J_{i}>0$. The argument leads to the asymptotic stability of $(u^{\ast
},v^{\ast })$ again.

\item If $d_{1}>d_{2}$, we have%
\begin{equation*}
2\left( d_{1}-d_{2}\right) \left( -F_{0}+\sigma G_{1}\right) >0,
\end{equation*}%
and since $\Upsilon >0$, we have $0<\lambda _{01}\leq \lambda _{02}$. Hence,%
\begin{equation*}
\left \{ 
\begin{array}{c}
\lambda _{i}\geq \lambda _{02}\medskip \\ 
\text{or}\medskip \\ 
\lambda _{i}\leq \lambda _{01}%
\end{array}%
\right. \Rightarrow \Upsilon _{i}\geq 0,
\end{equation*}%
which takes us back to the previous case. Again, for $\lambda _{1}d_{1}\geq
F_{0}$, we have $\det J_{i}>0$ and thus $\xi _{1}$ and $\xi _{2}$ are
negative. Next, if $\lambda _{01}<\lambda _{i}<\lambda _{02}$, we have $%
\Upsilon _{i}<0$ and $\det J_{i}>0$. The eigenvalues are, thus, complex, see
(\ref{3.11}). Hence, $\left( u^{\ast },v^{\ast }\right) $\ is an
asymptotically stable equilibrium subject to (\ref{3.12}) for all $\lambda
_{i}$ in the interval $\left( \lambda _{01},\lambda _{02}\right) $.
\end{itemize}
\end{itemize}
\end{proof}

\subsection{Global Stability}

In this section, we derive conditions for the global asymptotic stability.
First of all, let us define the function%
\begin{equation}
f_{a}\left( u\right) =\frac{a-u}{\varphi \left( u\right) },  \label{4.1}
\end{equation}%
where%
\begin{equation}
\varphi \left( u\right) =\frac{u}{1+u^{2}}.  \label{4.2}
\end{equation}%
Obviously, we have%
\begin{equation}
f_{a}\left( u^{\ast }\right) =\frac{4\alpha }{\varphi \left( \alpha \right) }%
.  \label{4.3}
\end{equation}%
Also, setting%
\begin{equation}
U=u-u^{\ast }\text{ and }V=v-v^{\ast },  \label{4.4}
\end{equation}%
we obtain the modified system%
\begin{equation}
\left\{ 
\begin{array}{l}
_{t_{0}}^{C}D_{t}^{\delta }U-d_{1}\Delta U=\varphi \left( U+u^{\ast }\right) 
\left[ \left( f_{a}\left( U+u^{\ast }\right) -f_{a}\left( u^{\ast }\right)
\right) -4V\right] ,\medskip  \\ 
_{t_{0}}^{C}D_{t}^{\delta }V-d_{2}\Delta V=\sigma b\varphi \left( U+u^{\ast
}\right) \left[ U\left( U+2u^{\ast }\right) -V\right] .%
\end{array}%
\right.   \label{4.5}
\end{equation}

\begin{theorem}
Subject to%
\begin{equation}
0<a^{2}\leq27,  \label{4.6}
\end{equation}
equilibrium $\left( u^{\ast},v^{\ast}\right) $ is globally asymptotically
stable.
\end{theorem}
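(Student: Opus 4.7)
The plan is to build a quadratic Lyapunov functional compatible with Lemma 2 and to exploit the fact that $a^{2}\leq 27$ is exactly the sharp condition under which the auxiliary function $f_{a}$ is non-increasing on $(0,\infty)$. Writing $f_{a}'(u)=(au^{2}-2u^{3}-a)/u^{2}$ and maximizing the cubic numerator at $u=a/3$, one sees this is equivalent to $a(a^{2}/27-1)\leq 0$. Monotonicity then gives the mean-value identity $U\bigl(f_{a}(u)-f_{a}(u^{*})\bigr)=-U^{2}\mu(u)$, where $\mu(u)=5/u+u-4\alpha$ is strictly positive on the invariant rectangle because $\alpha^{2}\leq 27/25<5/4$ implies the discriminant of $u^{2}-4\alpha u+5$ is negative.

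The next step is to consider the functional
$$W(t)=\frac{1}{2}\int_{\Omega}U^{2}\,dx+\frac{1}{2\sigma b\alpha}\int_{\Omega}V^{2}\,dx.$$
Applying Lemma 2 pointwise in $x$ (together with an interchange of the Caputo derivative and the spatial integral) and substituting from the modified system (\ref{4.5}), I would integrate the Laplacian terms by parts using the Neumann boundary condition; the resulting diffusion contribution $-d_{1}\Vert\nabla U\Vert_{L^{2}}^{2}-(d_{2}/\sigma b\alpha)\Vert\nabla V\Vert_{L^{2}}^{2}$ is manifestly non-positive. Using $U(U+2u^{*})=(u-\alpha)(u+\alpha)$, the remaining reaction integrand collapses to
$$\varphi(u)\left\{-\mu(u)\,U^{2}+\frac{u-3\alpha}{\alpha}\,UV-\frac{1}{\alpha}V^{2}\right\}.$$

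The main obstacle is verifying that this quadratic form in $(U,V)$ is pointwise non-positive on the invariant rectangle $u\in[0,a]=[0,5\alpha]$. Viewing it as a quadratic in $V$ with the negative leading coefficient $-1/\alpha$, non-positivity is equivalent to a non-positive discriminant, which after clearing denominators reduces to the clean cubic inequality
$$u(u-5\alpha)^{2}\leq 20\alpha,\qquad u\in[0,5\alpha].$$
A short derivative computation shows the left side attains its maximum $500\alpha^{3}/27$ at $u=5\alpha/3$, so the inequality holds throughout the invariant rectangle precisely when $25\alpha^{2}\leq 27$, i.e.\ iff $a^{2}\leq 27$. The delicacy is that this specific weight $1/(\sigma b\alpha)$ in $W$ is exactly what causes the discriminant to collapse to a cubic of the perfect-square form $u(u-5\alpha)^{2}$ whose interior maximum lies inside $[0,5\alpha]$ and is tight at the hypothesis; other choices of weight yield either redundant or strictly suboptimal bounds.

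With $_{0}^{C}D_{t}^{\delta}W(t)\leq 0$ established, and $W$ coercive with equality to zero only at $(u^{*},v^{*})$, the global asymptotic stability of the equilibrium follows from the standard fractional Lyapunov/invariance principle applied to $W$.
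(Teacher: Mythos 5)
Your proof is correct, but it takes a genuinely different route from the paper. The paper uses the cubic Lyapunov functional $L(t)=\int_{\Omega}\left[\frac{\sigma b}{3}U^{3}+\sigma bu^{\ast }U^{2}+2V^{2}\right]dx$, whose weights are chosen so that after substituting (\ref{4.5}) the $UV$ cross terms cancel \emph{identically}; negativity of the reaction part then follows at once from the mean value theorem and the fact that $f_{a}$ is strictly decreasing on $\left(0,a\right)$ when $a^{2}\leq 27$, with no discriminant analysis at all. The price is that differentiating the cubic term needs a fractional chain inequality beyond Lemma \ref{Lemma2} as stated: the paper (citing Alsaedi et al.) effectively uses $_{t_{0}}^{C}D_{t}^{\delta }g\left(U\right)\leq g^{\prime }\left(U\right)\,_{t_{0}}^{C}D_{t}^{\delta }U$ for $g\left(U\right)=\frac{1}{3}U^{3}+u^{\ast }U^{2}$, which is legitimate only because $g^{\prime \prime }\left(U\right)=2u>0$ on the invariant region. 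Your quadratic functional $W$ needs only Lemma \ref{Lemma2} literally and is manifestly coercive, but the cross terms survive, so you must verify pointwise negative semidefiniteness of the form $-\mu\left(u\right)U^{2}+\frac{u-3\alpha }{\alpha }UV-\frac{1}{\alpha }V^{2}$; your reduction checks out exactly — indeed $f_{a}\left(u\right)-f_{a}\left(\alpha \right)=-\left(u-\alpha \right)\left(\frac{5}{u}+u-4\alpha \right)$ (note this is an exact algebraic factorization, not a consequence of monotonicity as you phrase it), and the discriminant condition collapses, as you say, to
\begin{equation*}
u\left(u-5\alpha \right)^{2}\leq 20\alpha ,\qquad u\in \left[0,5\alpha \right],
\end{equation*}
whose maximum $500\alpha ^{3}/27$ at $u=5\alpha /3$ yields precisely $25\alpha ^{2}\leq 27$, i.e.\ $a^{2}\leq 27$. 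What each approach buys: the paper's cancellation trick avoids all case analysis and makes the hypothesis enter only through $f_{a}^{\prime }\leq 0$, while yours uses the weaker toolbox (only the quadratic lemma) and exposes the pleasant fact that $a^{2}\leq 27$ is the \emph{sharp} threshold for your functional — though your side claim that every other weight is suboptimal is asserted, not proved, and is not needed. Two shared caveats to keep in mind: both proofs rely on the solution remaining in the region of attraction $\Re _{a}$ of Proposition \ref{Prop0} (your cubic inequality fails for $u>5\alpha$), and at the borderline $a^{2}=27$ both derivatives are only negative \emph{semi}definite on a thin set ($u=5\alpha /3$ in your case, $f_{a}^{\prime }\left(c\right)=0$ at $c=a/3$ in the paper's), so the final appeal to a fractional Lyapunov theorem deserves the same care in both arguments.
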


\begin{proof}
In order to establish the global asymptotic stability, we use the Lyapunov
method. Let%
\begin{equation}
L\left( t\right) =\int_{\Omega }\left[ \frac{\sigma b}{3}U^{3}+\sigma
bu^{\ast }U^{2}+2V^{2}\right] dx.  \label{4.7}
\end{equation}%
Taking the fractional Caputo derivative of (\ref{4.7}) and using (\ref{1.8}%
), we obtain%
\begin{align*}
_{t_{0}}^{C}D_{t}^{\delta }L\left( t\right) & =\int_{\Omega }\left[ \left( 
\frac{\sigma b}{3}\right) \text{ }_{t_{0}}^{C}D_{t}^{\alpha }U^{3}+\left(
\sigma bu^{\ast }\right) \ _{t_{0}}^{C}D_{t}^{\alpha
}U^{2}+2_{t_{0}}^{C}D_{t}^{\alpha }V^{2}\right] dx\medskip \\
& \leq \int_{\Omega }\left[ \sigma bU^{2}{}_{t_{0}}^{C}D_{t}^{\alpha
}U+2\left( \sigma bu^{\ast }\right) U\ _{t_{0}}^{C}D_{t}^{\alpha }U+4V\
_{t_{0}}^{C}D_{t}^{\alpha }V\right] dx,
\end{align*}%
see \cite{Alsaedi2015}. Further simplification yields%
\begin{align*}
_{t_{0}}^{C}D_{t}^{\delta }L\left( t\right) & \leq \int_{\Omega }\left[
\sigma bU\left( U+2u^{\ast }\right) \ _{t_{0}}^{C}D_{t}^{\alpha }U+4V\
_{t_{0}}^{C}D_{t}^{\alpha }V\right] dx\medskip \\
& \leq \int_{\Omega }\varphi \left( U+u^{\ast }\right) \left \{ \sigma
bU\left( U+2u^{\ast }\right) \left[ \left( f_{a}\left( U+u^{\ast }\right)
-f_{a}\left( u^{\ast }\right) \right) -4V\right] \right. \\
& \ \ \ \ \ \left. +4V\sigma b\left[ U\left( U+2u^{\ast }\right) -V\right]
\right \} dx+\int_{\Omega }\sigma bU\left( U+2u^{\ast }\right) d_{1}\Delta
Udx \\
& \ \ \ \ \ +\int_{\Omega }4Vd_{2}\Delta Vdx\medskip \\
& \leq \int_{\Omega }\sigma b\varphi \left( U+u^{\ast }\right) \left \{
U\left( U+2u^{\ast }\right) \left( f_{a}\left( U+u^{\ast }\right)
-f_{a}\left( u^{\ast }\right) \right) \right. \\
& \ \ \ \ \ \left. -4U\left( U+2u^{\ast }\right) V+4VU\left( U+2u^{\ast
}\right) -4V^{2}\right \} dx \\
& \ \ \ \ \ +\sigma b\int_{\Omega }U\left( U+2u^{\ast }\right) d_{1}\Delta
Udx+4d_{2}\int_{\Omega }V\Delta Vdx,
\end{align*}%
leading to%
\begin{align}
_{t_{0}}^{C}D_{t}^{\delta }L\left( U,V\right) & \leq \underset{I_{1}\left(
t\right) }{\underbrace{\sigma b\int_{\Omega }\varphi \left( U+u^{\ast
}\right) \left \{ U\left( U+2u^{\ast }\right) \left( f_{a}\left( U+u^{\ast
}\right) -f_{a}\left( u^{\ast }\right) \right) -4V^{2}\right \} dx}}+  \notag
\\
& \ \ \ \ \ +\underset{I_{2}\left( t\right) }{\underbrace{\sigma
bd_{1}\int_{\Omega }U\left( U+2u^{\ast }\right) \Delta
Udx+4d_{2}\int_{\Omega }V\Delta Vdx}}.  \label{4.8}
\end{align}

We note that the function $f_{a}$ is strictly decreasing over the interval $%
\left( 0,a\right) $ when $0<a^{2}\leq 27$. Hence, by the mean value theorem,
there exists some $c$ between $u$ and $u^{\ast }$ such that%
\begin{equation*}
f_{a}\left( U+u^{\ast }\right) -f_{a}\left( u^{\ast }\right) =Uf_{a}^{\prime
}\left( c\right) .
\end{equation*}%
Substituting in $I_{1}\left( t\right) $ yields%
\begin{equation*}
I_{1}\left( t\right) =\sigma b\int_{\Omega }\varphi \left( U+u^{\ast
}\right) \left \{ U^{2}\left( U+2u^{\ast }\right) f_{a}^{\prime }\left(
c\right) -4V^{2}\right \} <0.
\end{equation*}%
For $I_{2}\left( t\right) $, we have%
\begin{align*}
I_{2}\left( t\right) & =\sigma bd_{1}\int_{\Omega }U\left( U+2u^{\ast
}\right) \Delta Udx+4d_{2}\int_{\Omega }V\Delta Vdx\medskip \\
& =-\sigma bd_{1}\int_{\Omega }\nabla \left( U^{2}+2u^{\ast }U\right) \nabla
Udx-4d_{2}\int_{\Omega }\left \vert \nabla V\right \vert ^{2}dx\medskip \\
& =-\sigma bd_{1}\int_{\Omega }2\left( U+u^{\ast }\right) \left \vert \nabla
U\right \vert ^{2}dx-4d_{2}\int_{\Omega }\left \vert \nabla V\right \vert
^{2}dx<0.
\end{align*}%
Hence,%
\begin{equation*}
_{t_{0}}^{C}D_{t}^{\delta }L\left( U,V\right) <0
\end{equation*}%
and $_{t_{0}}^{C}D_{t}^{\delta }L\left( t\right) =0$ if and only if $\left(
U,V\right) =\left( 0,0\right) $. Therefore, by the direct Lyapunov method,
the constant steady state $\left( u^{\ast },v^{\ast }\right) $ is globally
asymptotically stable subject to (\ref{4.6}).
\end{proof}

\section{Numerical Examples\label{SecNum}}

In this section, we present some numerical examples to show the effect of $%
\delta $ on the dynamics of the fractional Lengyel--Epstein system (\ref{2.1}%
). Consider the parameter set $\left( a,b,\sigma ,d_{1},d_{2}\right) =\left(
15,1,7,1,10\right) $ and initial conditions%
\begin{equation}
\left \{ 
\begin{array}{l}
u\left( x,0\right) =1+0.3\sin \left( \frac{x}{2}\right) ,\medskip \\ 
v\left( x,0\right) =2+0.6\sin \left( \frac{x}{2}\right) .%
\end{array}%
\right.  \label{5.1}
\end{equation}%
The solutions of system (\ref{2.1}) with zero Neumann boundary conditions
and different values of $\delta $ were obtained numerically for $t\in \left[
0,10\right] $ and $x\in \left[ 0,20\right] $ with $\Delta t=0.001$ and $%
\Delta x=0.5$. Figures \ref{Fig1} and \ref{Fig2} show the one--dimensional
spatio--temporal states $u\left( x,t\right) $ and $v\left( x,t\right) $,
respectively. We see that for $\delta =1$, the solution is oscillatory in
nature and thus asymptotically unstable. This is confirmed by means of the
phase--space plot taken at a single spatial point $x=10$ as depicted in
Figure \ref{Fig3}. The solution converges to an ellipse signifying a
periodic nature. As $\delta $ is made smaller, the solution becomes
asymptotically stable and converges to the unique spatially homogeneous
constant steady state 
\begin{equation}
\left( u^{\ast },v^{\ast }\right) =\left( \frac{a}{5},1+\left( \frac{a}{5}%
\right) ^{2}\right) =\left( 3,10\right) .  \label{5.2}
\end{equation}%
Furthermore, we see that the smaller $\delta $, the faster the solution
converges to the steady state. This strong dependence of the asymptotic
stability on $\delta $ is very interesting as it gives us a new perspective
into the control and dynamics of the CIMA\ chemical reaction.

\begin{figure}[tbh]
\centering \includegraphics[width=5in]{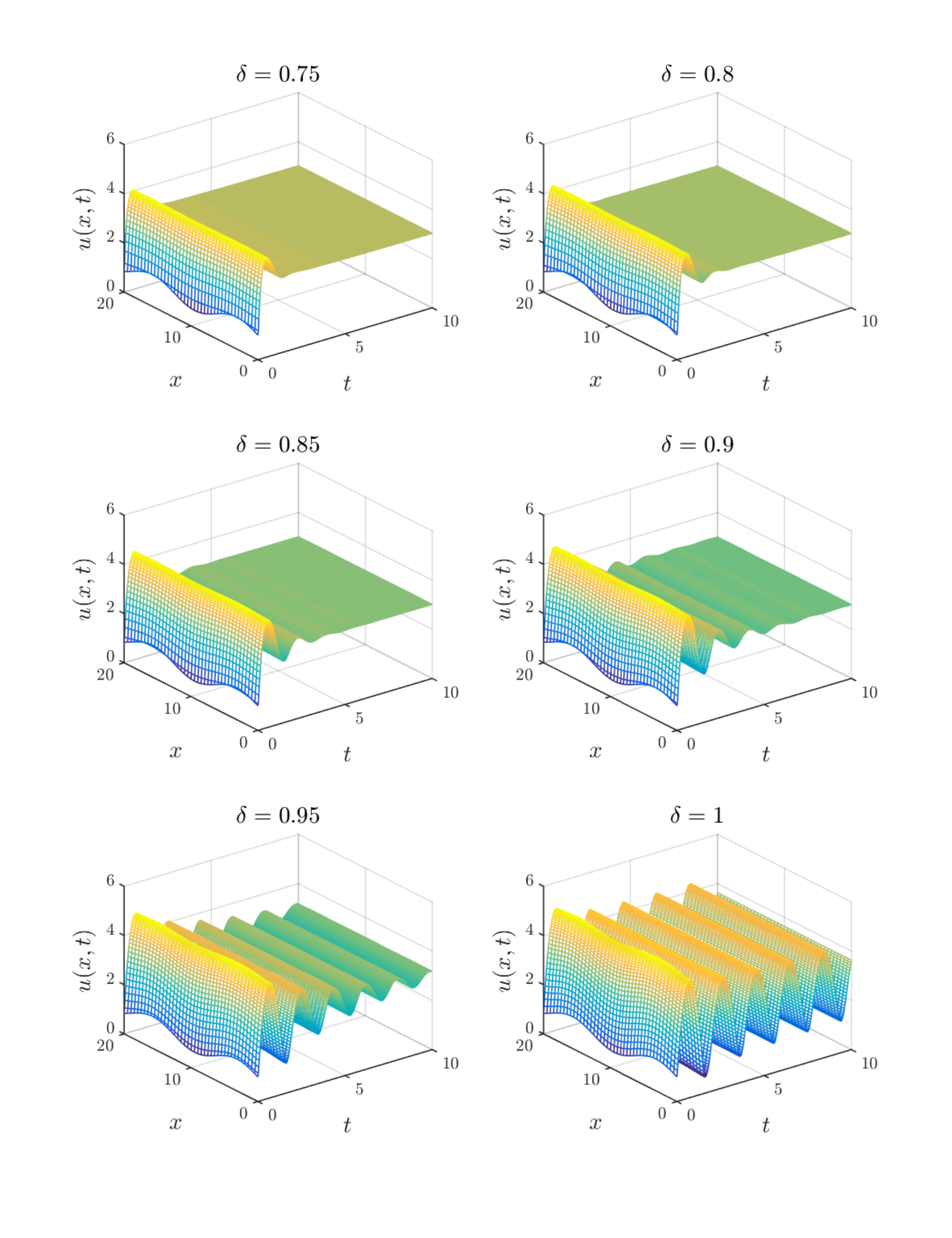}
\caption{One dimensional concentration $u\left( x,t\right) $ as a solution
of (\protect\ref{2.1}) with $\left( a,b,\protect\sigma,d_{1},d_{2}\right)
=\left( 15,1,7,1,10\right) \,$, initial conditions (\protect\ref{5.1}), zero
Nuemann boundaries, and different values for $\protect\delta$.}
\label{Fig1}
\end{figure}

\begin{figure}[tbh]
\centering \includegraphics[width=5in]{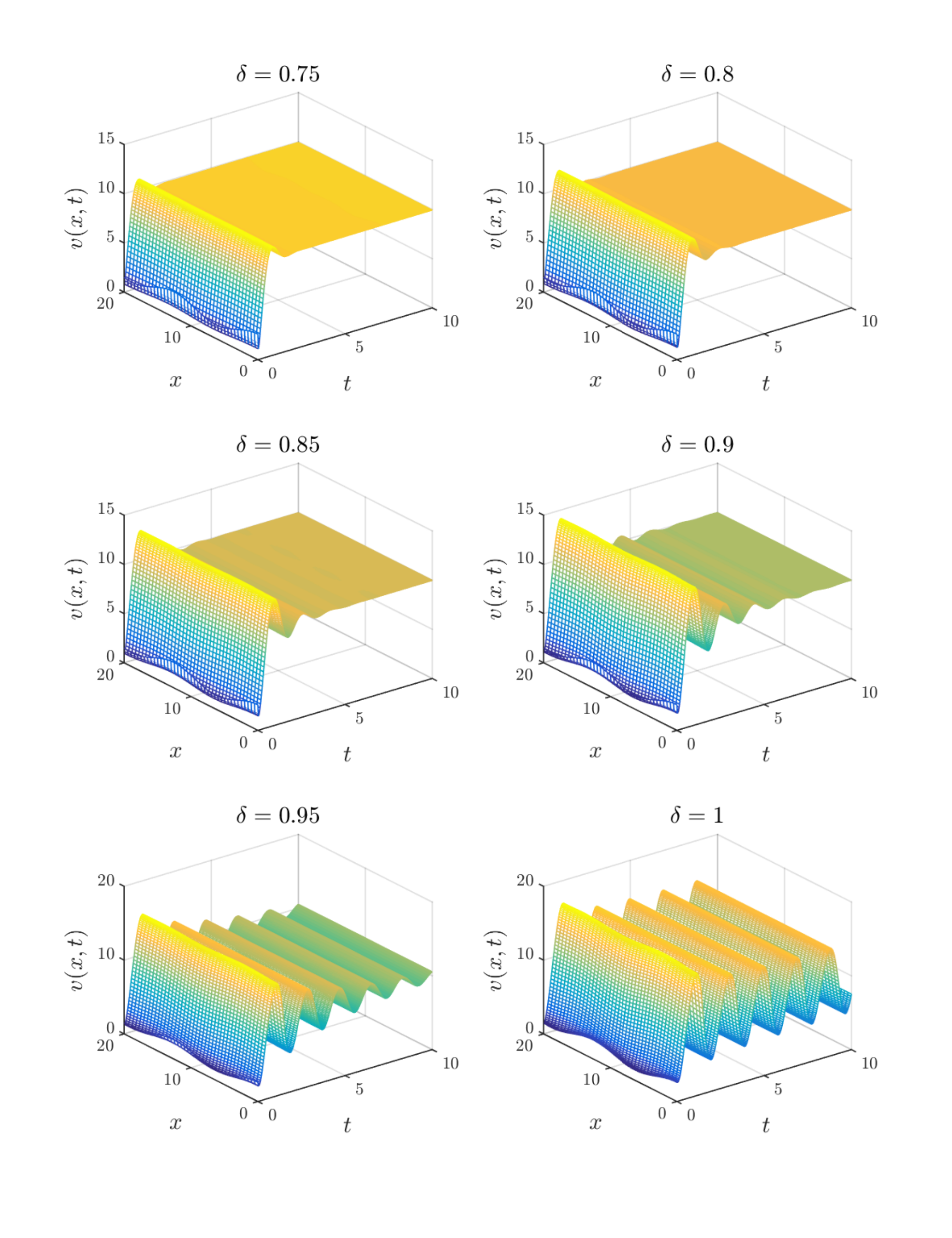}
\caption{One dimensional concentration $v\left( x,t\right) $ as a solution
of (\protect\ref{2.1}) with $\left( a,b,\protect\sigma,d_{1},d_{2}\right)
=\left( 15,1,7,1,10\right) \,$, initial conditions (\protect\ref{5.1}), zero
Nuemann boundaries, and different values for $\protect\delta$.}
\label{Fig2}
\end{figure}

\begin{figure}[tbh]
\centering \includegraphics[width=5in]{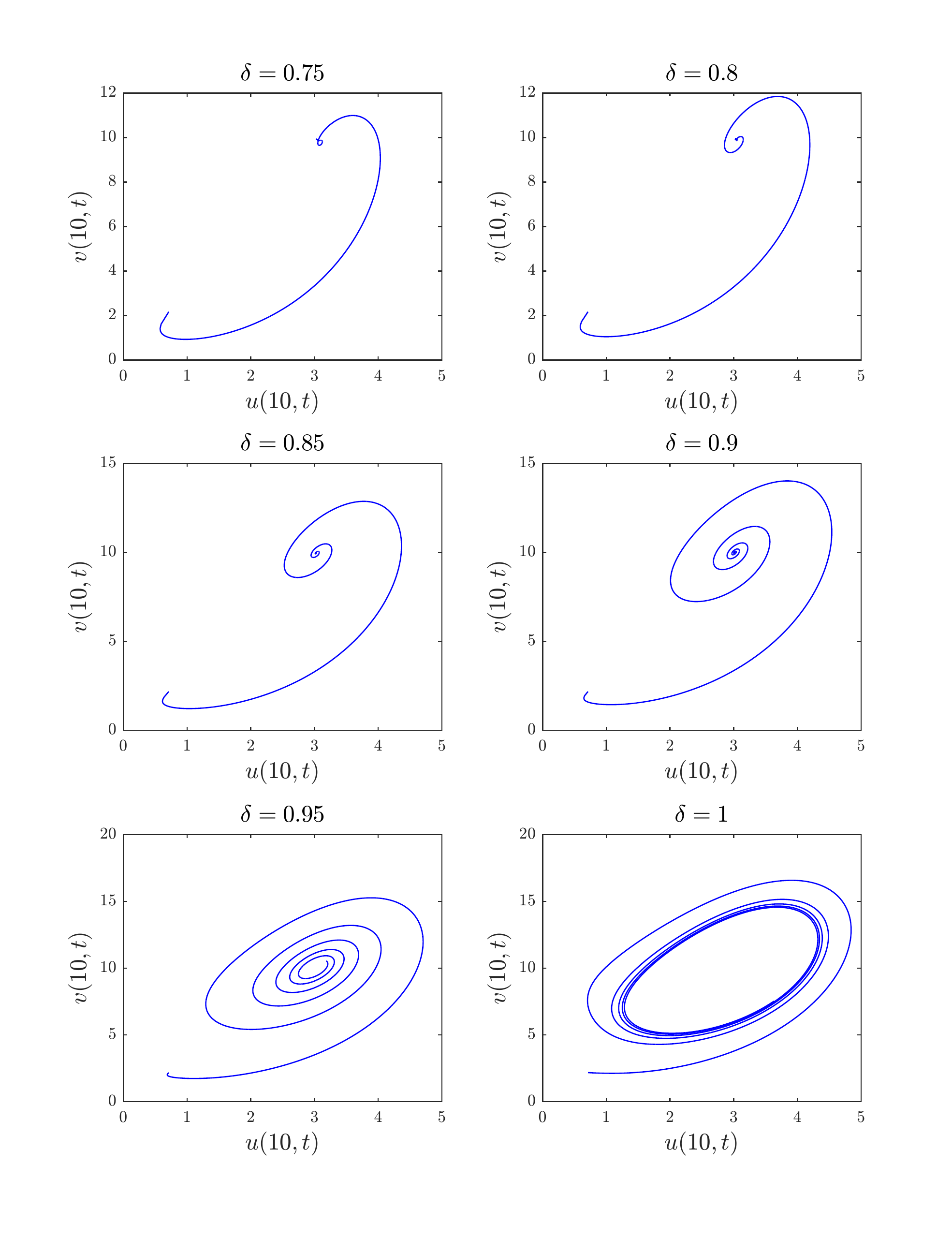}
\caption{Phase plot of system (\protect\ref{2.1}) taken at $x=10$ with $%
\left( a,b,\protect\sigma ,d_{1},d_{2}\right) =\left( 15,1,7,1,10\right) \,$%
, initial conditions (\protect\ref{5.1}), zero Nuemann boundaries, and
different values for $\protect\delta $.}
\label{Fig3}
\end{figure}

In addition to these one--dimensional examples, we have also examined the
two--dimensional case. We consider the parameter set $\left( a,b,\sigma
,d_{1},d_{2}\right) =\left( 15,1.2,8,1,24\right) $ with initial conditions%
\begin{equation}
\left \{ 
\begin{array}{l}
u\left( x,y,0\right) =3.5\left( 1+0.2w_{u}\left( x,y\right) \right) ,\medskip
\\ 
v\left( x,y,0\right) =10.5\left( 1+0.2w_{v}\left( x,y\right) \right) .%
\end{array}%
\right.  \label{5.3}
\end{equation}%
with $w_{u}\left( x,y\right) $ and $w_{v}\left( x,y\right) $ being Gaussian
distributed random functions with zero mean and unit variance. Figure \ref%
{Fig4} shows snap shots of the concentrations $u\left( x,y,t\right) $ and $%
v\left( x,y,t\right) $ taken at time instances $t=0$, $t=5$, and $t=20$ with 
$\delta =1$. We see that the diffusion--driven or Turing instability leads
to the formation of patterns in the form of dots and stripes. Reducing the
fractional order to $\delta =0.98$ leads to a different type of patterns as
shown in Figure \ref{Fig5}. This means that the fractional order has an
impact on the Turing patterns evolving over time, which is an interesting
observation. Reducing the fractional order further to $\delta =0.95$ also
yields slightly different patterns as shown in Figure \ref{Fig6}.

\begin{figure}[tbh]
\centering \includegraphics[width=4in]{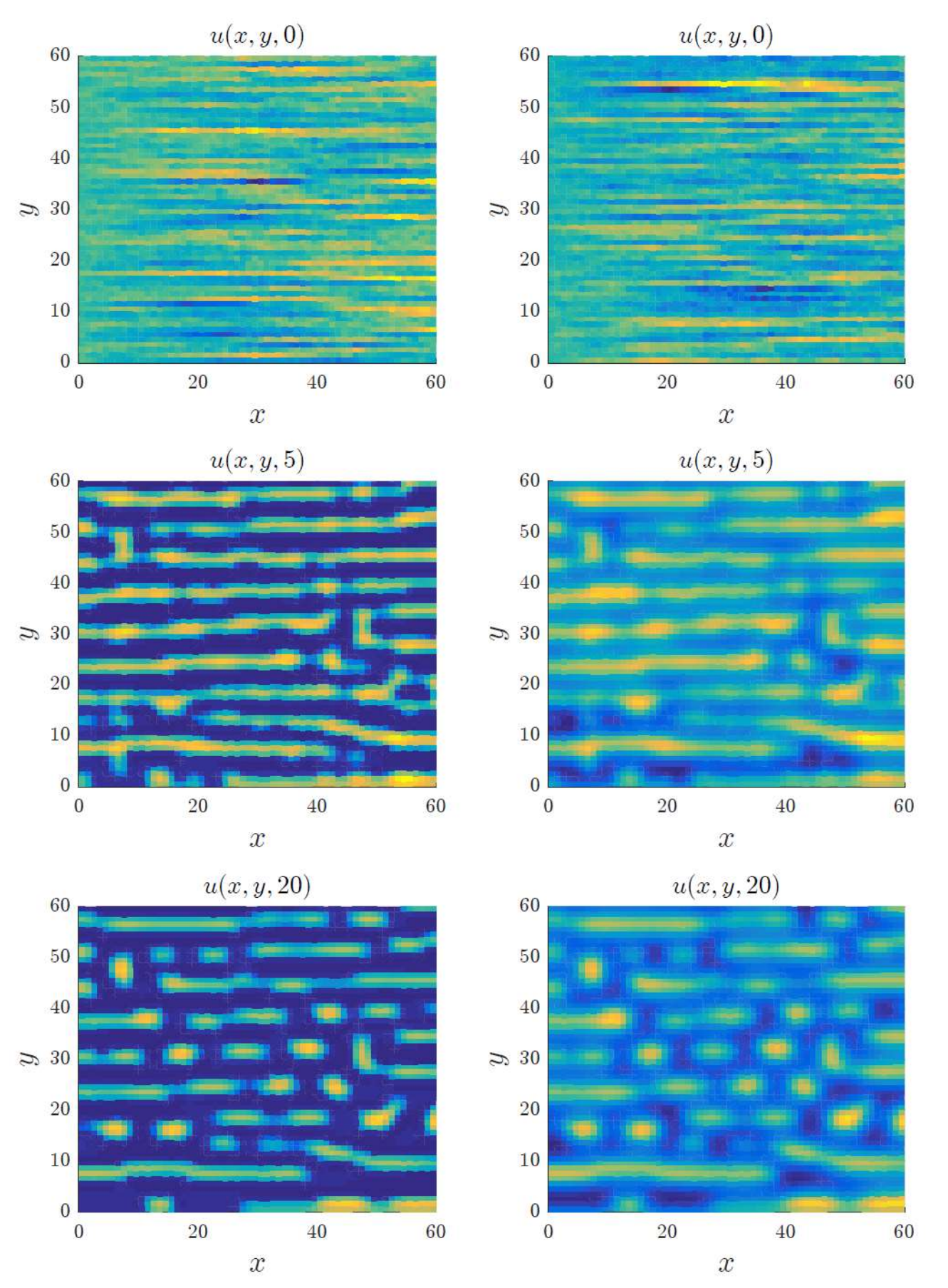}
\caption{Two dimensional concentrations $u\left( x,y,t\right) $ and $v\left(
x,y,t\right) $ for $\left( a,b,\protect\sigma ,d_{1},d_{2}\right) =\left(
15,1.2,8,1,24\right) \,$, initial conditions (\protect\ref{5.3}), zero
Nuemann boundaries, and $\protect\delta =1$.}
\label{Fig4}
\end{figure}

\begin{figure}[tbh]
\centering \includegraphics[width=4in]{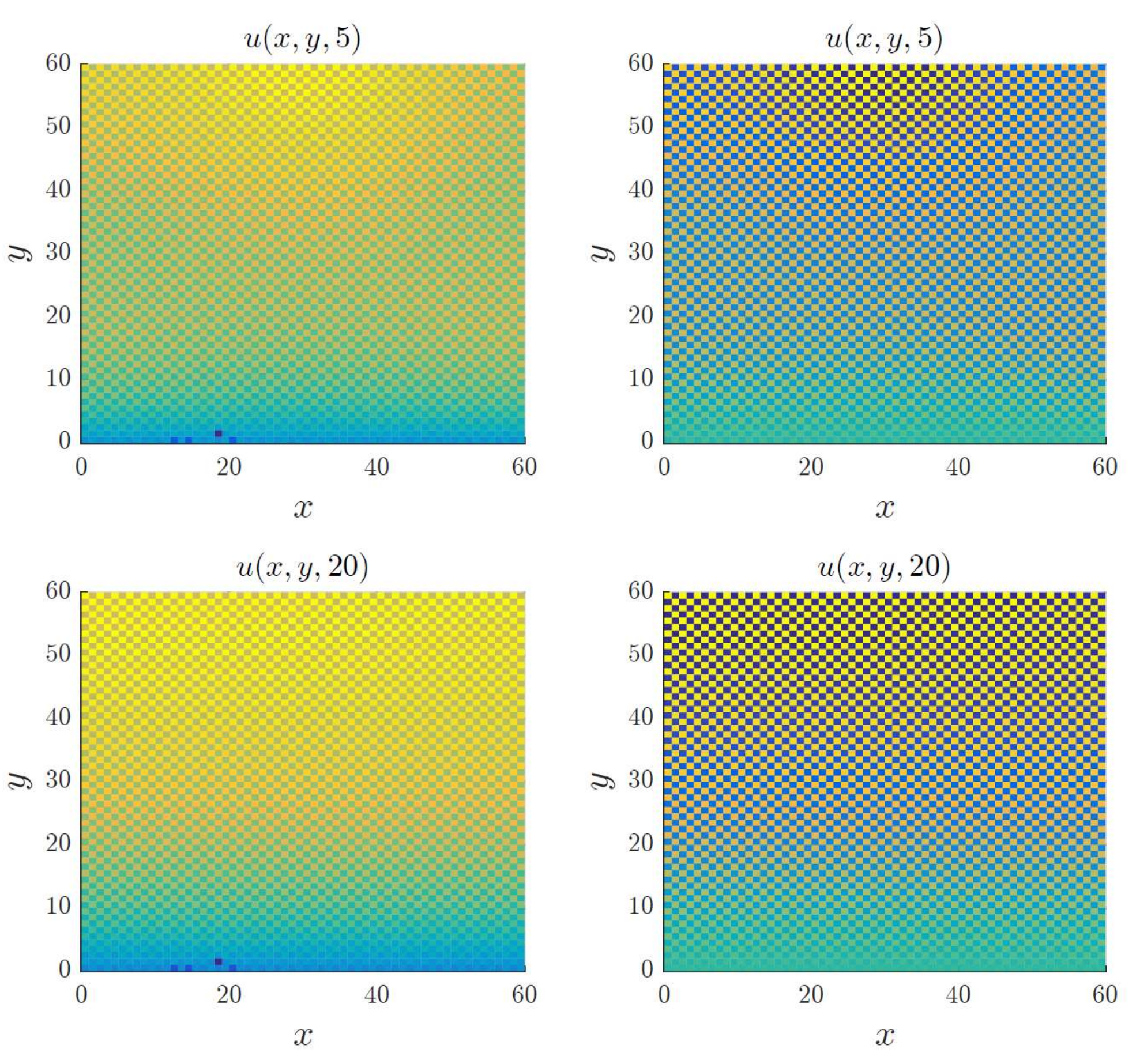}
\caption{Two dimensional concentrations $u\left( x,y,t\right) $ and $v\left(
x,y,t\right) $ for $\left( a,b,\protect\sigma ,d_{1},d_{2}\right) =\left(
15,1.2,8,1,24\right) \,$, initial conditions (\protect\ref{5.3}), zero
Nuemann boundaries, and $\protect\delta =0.98$.}
\label{Fig5}
\end{figure}

\begin{figure}[tbh]
\centering \includegraphics[width=4in]{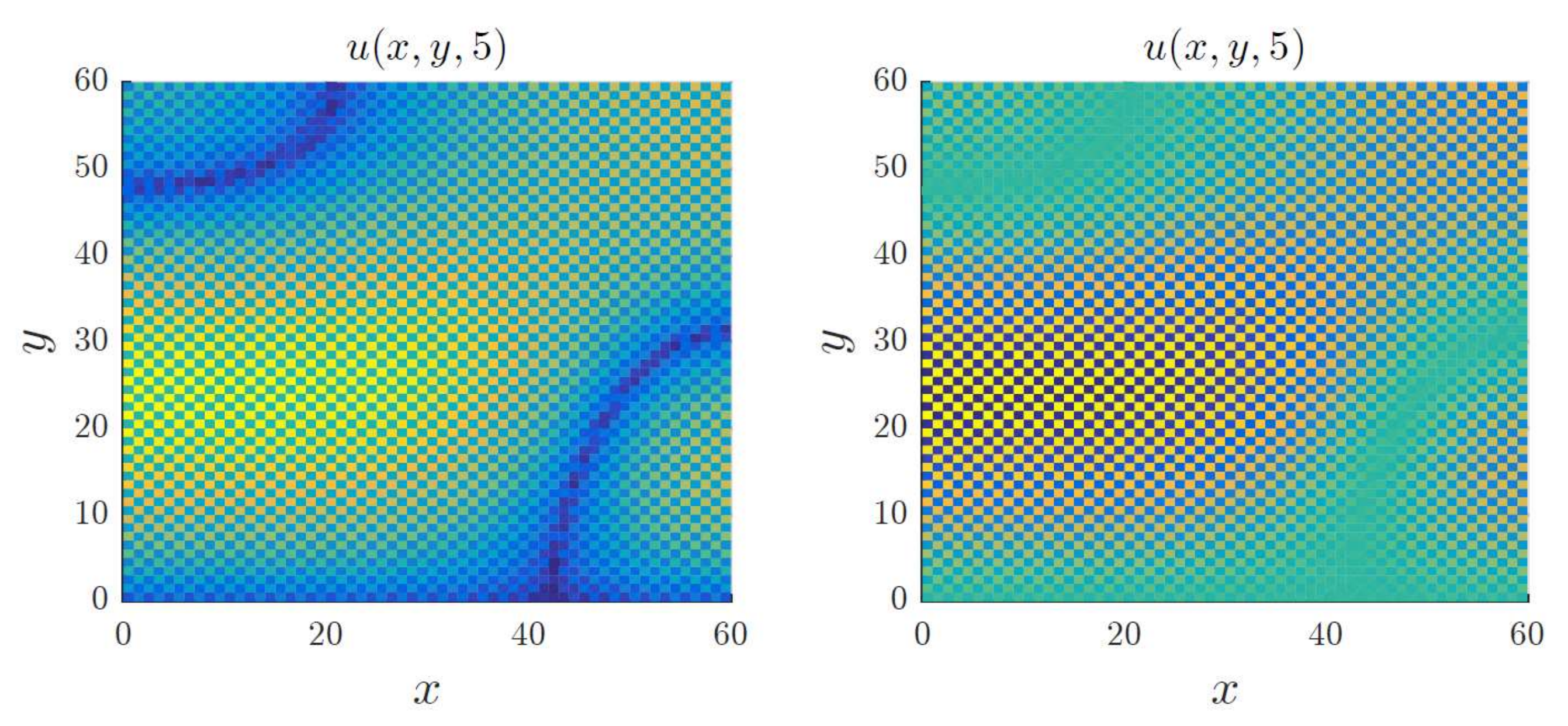}
\caption{Two dimensional concentrations $u\left( x,y,t\right) $ and $v\left(
x,y,t\right) $ for $\left( a,b,\protect\sigma ,d_{1},d_{2}\right) =\left(
15,1.2,8,1,24\right) \,$, initial conditions (\protect\ref{5.3}), zero
Nuemann boundaries, and $\protect\delta =0.95$.}
\label{Fig6}
\end{figure}

\section{Concluding Remarks\label{SecSum}}

In this paper, we have considered a time--fractional version of the
Lengyel--Epstein system modeling the chlorite--iodide malonic acid (CIMA)
chemical reaction. The Lengyel--Epstein model is well known for exhibiting
Turing patterns, which makes it of interest to researchers in mathematics,
chemistry, and biology. Introducing fractional time derivatives has recently
been shown to model natural phenomena more accurately especially in chemical
reactions. We have established sufficient conditions for the local
asymptotic stability of the system's unique equilibrium in the ODE and PDE
senses through the linearization method. In addition, we have employed the
direct Lyapunov method to establish the global asymptotic stability of the
steady state solution.

Through numerical investigation, we have seen that a periodic solution in
the standard case, which corresponds to pattern formation, became
asymptotically stable when the differentiation order decreased below $1$.
This is an important observation that requires closer investigation and
analysis as it provides a new perspective into the control and applications
of the Lengyel--Epstein system. We have also seen that the presence of
diffusion alters the stability conditions of the system, which is not at all
unlike the standard case. Furthermore, we saw that the type of patterns that
form as a result of the diffusion--driven instability changes as the
fractional order is varied. More investigation will be performed in future
studies to explore these observations.

\section*{Acknowledgment}

The authors would like to thank Prof. M. Kirane of La Rochelle University in
France for his continued assistance and guidance that led to the conclusion
of this research.

\end{document}